\date{}
\active \gdef@{\mkern1mu}} 
\newcommand{\B}{{\mathbb B}}
\newcommand{\Hi}{{\mathcal H}}
\newcommand{\Z}{{\mathbbm Z}}
\newcommand{\R}{{\mathbbm R}}
\newcommand{\C}{{\mathbbm C}}
\newcommand{\CC}{{\mathcal C}}
\newcommand{\I}{{\mathbbm I}}
\newcommand{\Int}{{\mathrm{int}}}
\newcommand{\e}{{\varepsilon}}
\newtheorem{theorem}{Theorem}[section]
\newtheorem{lemma}[theorem]{Lemma}
\newtheorem{prop}[theorem]{Proposition}
\newtheorem{coro}[theorem]{Corollary}
\theoremstyle{definition}
\newtheorem{remark}[theorem]{Remark}
\theoremstyle{definition}
\theoremstyle{definition}
\theoremstyle{definition}
\theoremstyle{definition}
\theoremstyle{definition}
\numberwithin{equation}{section}
\newcommand{\tr}{\mathrm{tr} }
\newcommand{\p}{{\mathfrak p}}
\newcommand{\fr}{{\mathfrak{r}}}
\newcommand{\vect}{{\mathrm{vec}}}
\newcommand{\set}[1]{\left\{#1\right\}}
\newcommand{\eqdef}{\overset{\mathrm{def}}=}
\begin{document}

\author[M.\ Embree]{Mark Embree}

\address{Department of Mathematics, Virginia Tech, 225 Stanger Street, Blacksburg, VA~24061, USA}

\email{embree@vt.edu}

\thanks{M.\ E.\ was supported in part by National Science Foundation grant DGE-1545362.}

\author[J.\ Fillman]{Jake Fillman}

\address{Department of Mathematics, Virginia Tech, 225 Stanger Street, Blacksburg, VA~24061, USA}

\email{fillman@vt.edu}

\thanks{J.\ F.\ was supported in part by an AMS-Simons Travel Grant, 2016--2018}


\title[Spectra of Small Periodic Potentials]{Spectra of Discrete Two-Dimensional Periodic Schr\"odinger Operators with Small Potentials}

\maketitle

\begin{abstract}
We show that the spectrum of a discrete two-dimensional periodic Schr\"odinger operator on a square lattice with a sufficiently small potential is an interval, provided the period is odd in at least one dimension. In general, we show that the spectrum may consist of at most two intervals and that a gap may only open at energy zero. This sharpens several results of Kr\"uger and may be thought of as a discrete version of the Bethe--Sommerfeld conjecture. We also describe an application to the study of two-dimensional almost-periodic operators.
\end{abstract}


\section{Introduction}

Researchers in mathematics and physics have extensively investigated spectral and quantum dynamical characteristics of one-dimensional Hamiltonians of the form
\begin{equation} \label{eq:1dso}
[H\psi]_n
=
\psi_{n-1} + V_n \psi_n + \psi_{n+1},
\quad
n \in \Z, \; \psi \in \ell^2(\Z),
\end{equation}
where $V:\Z \to \R$ is a bounded sequence, known as the \emph{potential}. The most heavily-studied models are those for which $V$ is periodic, almost-periodic, or random. Almost-periodic operators can exhibit wild spectral characteristics, such as Cantor spectrum of zero Lebesgue measure and purely singular continuous spectral type. The literature on such operators is vast; see \cite{DamanikESOSurv,DF,JitoSurv,JMSurv} and references therein. Though such phenomena were once thought to be exotic and rare, Cantor spectrum and purely singular continuous spectral type turn out to be generic in a rather robust sense for many families of one-dimensional operators having the form~\eqref{eq:1dso} \cite{Avi09, AviDam2005DMJ, AviJit2009Annals, DamLen2006DMJ, Simon1995Annals}. 
The more complicated structure of higher-dimensional analogs of \eqref{eq:1dso} makes such models prohibitively difficult to study, even in simple cases. 
With some notable exceptions (see, e.g.\ \cite{Gordon2015,GordNem2016,KarpLee2013,KLSS2016}), spectral properties of aperiodic almost-periodic Schr\"odinger operators in higher dimensions have proved quite difficult to study.

Recently some success has been achieved by studying operators that are separable, in the sense that they can be separated into a sum of two commuting one-dimensional operators; such separable operators are amenable to attack, as their spectra can be expressed as the sum of the spectra of their one-dimensional components, which are well-understood. Even in this situation, one must deal with delicate challenges, such as arithmetic sums of Cantor sets and convolutions of singular measures. Initial insight about these operators and their spectra came from numerical studies, mainly appearing in the physics literature \cite{DEG15, EveLif2006, EveLif2008, EveLifPreprint, IlaLibEveLif2004, Sire1989, SirMos1989, SirMos1990, SirMosSad1989, ThiSch2013}. Rigorous results have been obtained fairly recently in \cite{DGS13,FTY2016}.

The present paper addresses discrete two-dimensional Schr\"odinger operators on a square lattice, defined by
\begin{equation} \label{eq:2dso}
\begin{split}
H = \Delta + V, \quad & [V\psi]_{n,m} = V_{n,m}\psi_{n,m} \\
[\Delta\psi]_{n,m}
=
\psi_{n-1,m} + \psi_{n+1,m} & + \psi_{n,m-1} + \psi_{n,m+1}, \\
n,m \in\Z,
\; \psi & \in \ell^2(\Z^2),
\end{split}
\end{equation}
with $V$ periodic in the sense that there exist $p,q \in \Z_+$ with 
\begin{equation} \label{eq:2dperpot:def}
V_{n+p,m} = V_{n,m+q}
= 
V_{n,m}
\text{ for all }n,m \in \Z.
\end{equation}
When \eqref{eq:2dperpot:def} holds for some $\p = (p,q) \in \Z_+^2$, we say $V$ is $\p$-\emph{periodic}. The study of Schr\"odinger operators on $\Z^d$ (and more generally on $\Z^d$-periodic lattices) is of interest due to applications in chemistry and physics; see the survey \cite{CGPNG} for instance. Many papers and books have been written about operators on graphs; see \cite{BroHae2012, Chung1997, CDGT1988, CDS1995, GKT1993, Post2012} and references therein.

 Our main result shows that the spectra of such objects are quite different from those of operators like \eqref{eq:1dso}. Concretely, we prove that any periodic potential in dimension two that is sufficiently small will produce a spectrum with at most two connected components if $p$ and $q$ are both even, and with one connected component otherwise. This result contrasts strongly with the one-dimensional case, in which a generic $p$-periodic operator has spectrum with $p$ connected components.

\begin{theorem} \label{t:discreteBSconj}
Let $\p = (p,q)$ be given. There exists a constant $C = C_\p > 0$ such that the following statements hold true:
\begin{enumerate}
\item[{\rm(1)}] If $V$ is $\p$-periodic and $\|V\|_\infty \le C$, then $\sigma(H_V)$ has at most two connected components. 

\item[{\rm(2)}] If at least one of $p$ or $q$ is odd, then $\sigma(H_V)$ is a single interval whenever $V$ is $\p$-periodic and $\|V\|_\infty \le C$. 
\end{enumerate}
\end{theorem}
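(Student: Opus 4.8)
The plan is to pass to the Floquet--Bloch representation and thereby reduce both assertions to a finite-dimensional statement about where the free bands can be pried apart by a small perturbation. Writing the fiber operators $H_V(\theta)$, $\theta \in \T^2$, as $pq \times pq$ Hermitian matrices that depend analytically on $\theta$, one has $\sigma(H_V) = \bigcup_{\theta \in \T^2} \sigma(H_V(\theta))$ together with $\norm{H_V(\theta) - \Delta(\theta)} \le \norm{V}_\infty$ for every $\theta$, so each ordered band function $E_j(\theta)$ moves by at most $\norm{V}_\infty$ under the perturbation. Since $\Delta$ separates as a sum of two commuting one-dimensional Laplacians, its free bands are the sums $E_{i,\ell}(\theta) = 2\cos\tfrac{\theta_1 + 2\pi i}{p} + 2\cos\tfrac{\theta_2 + 2\pi\ell}{q}$ and $\sigma(\Delta) = [-4,4]$. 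The key device is, for each energy $E_0$, the counting functions $N_<(\theta) = \#\{(i,\ell) : E_{i,\ell}(\theta) < E_0\}$ and $N_\le(\theta) = \#\{(i,\ell) : E_{i,\ell}(\theta) \le E_0\}$. A short calculation shows that $E_0$ lies in the interior of some ordered free band --- equivalently $\min_\theta N_\le(\theta) < \max_\theta N_<(\theta)$ --- exactly when some ordered band crosses $E_0$ transversally. When this happens, continuity of the bands on the connected torus $\T^2$ keeps the crossing intact after a perturbation of size $\le\norm{V}_\infty$, so a whole neighborhood of $E_0$ remains in $\sigma(H_V)$ and no gap can sit there; a compactness argument over $E_0$, using analyticity to bound the crossing speed from below on compact sets, turns this into a single threshold $C = C_\p$. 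Thus gaps can open only at the ``touching'' energies where $\min_\theta N_\le = \max_\theta N_<$.

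First I would dispose of every $E_0 \ne 0$. The symbol $\omega(k) = 2\cos k_1 + 2\cos k_2$ has critical values only at $\pm 4$ (its extrema) and at $0$ (the saddles at $(\pi,0)$ and $(0,\pi)$). For a regular value $E_0 \in (-4,4)\setminus\{0\}$ the corresponding level set is a smooth curve crossed transversally by $\omega$, so the count $N_<$ genuinely jumps across the folded Fermi surface, giving the strict inequality $\min_\theta N_\le < \max_\theta N_<$. Hence every such $E_0$ is robustly covered, the only possible gap location is $E_0 = 0$, and $\sigma(H_V)$ has at most two connected components, which is part (1).

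The heart of the matter is the behavior at $E_0 = 0$, which is governed by the chiral symmetry $(U\psi)_{n,m} = (-1)^{n+m}\psi_{n,m}$, satisfying $U\Delta U^{*} = -\Delta$ and corresponding to the momentum shift $k \mapsto k + (\pi,\pi)$. On the folded torus this shift induces an involution $\iota$ with $\sigma(\Delta(\iota\theta)) = -\sigma(\Delta(\theta))$ for every $\p$, so $\max_\theta N_< = \max_\theta N_>$ at $E_0 = 0$; together with $N_< + N_> = pq$ at points carrying no zero eigenvalue, the touching criterion at $0$ collapses to the clean dichotomy ``$0$ is robustly covered $\iff \max_\theta N_<(\theta) > pq/2$.'' Now a single direction of period $p$ behaves differently according to parity: the shift $k_1 \mapsto k_1 + \pi$ permutes sheets at the same quasimomentum when $p$ is even, forcing the one-dimensional fiber spectrum $\{2\cos\tfrac{\theta_1 + 2\pi i}{p}\}_i$ to be symmetric about $0$, whereas for $p$ odd it is a half-Brillouin-zone translation. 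Consequently, if $p$ and $q$ are both even the sum-spectrum $\{E_{i,\ell}(\theta)\}$ is symmetric about $0$ for every $\theta$, whence $N_< = N_>$ identically and $\max_\theta N_< = pq/2$; the inequality fails, $0$ is only a touching energy, and a gap may genuinely open there --- so ``two components'' is sharp.

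The remaining case, $p$ or $q$ odd, is the step I expect to be the main obstacle. Here I would show that the sum-spectrum fails to be symmetric about $0$ for generic $\theta$, so that at such a $\theta$ (with no eigenvalue exactly at $0$) one has $N_<(\theta) \ne N_>(\theta)$ and therefore, after applying $\iota$ if necessary, $N_<(\theta) > pq/2$; this yields $\max_\theta N_< > pq/2$, makes $0$ robustly covered, and forces $\sigma(H_V)$ to be a single interval, which is part (2). The asymmetry itself is the technical crux: it should follow from a generating-function or moment computation showing that an odd spectral moment of the odd-period fiber is nonvanishing, the lowest surviving one being forced by the parity of the period, so that the same moment of the two-dimensional sum persists for generic $\theta$. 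The delicate points to nail down are that this asymmetry is quantitative and stable in $\theta$ --- so that the transversal crossing it produces feeds a uniform constant $C_\p$ rather than merely a pointwise statement --- and the bookkeeping that converts a crossing of the unordered sheet into a genuine transversal crossing of an \emph{ordered} band at energy $0$.
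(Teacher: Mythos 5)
Your overall skeleton (Floquet--Bloch reduction, $1$-Lipschitz dependence of band edges on $\|V\|_\infty$, the counting criterion $\min_\theta N_\le < \max_\theta N_<$, compactness in $E_0$ to extract a uniform $C_\p$) matches the paper's strategy, but the step carrying all the weight in your part (1) is invalid as reasoned. Regularity of the value $E_0$ for the symbol $\omega(k)=2\cos k_1+2\cos k_2$ makes each \emph{individual} folded sheet cross $E_0$ transversally, but it does not make the ordered count $N_<$ jump: several sheets can attain $E_0$ along the \emph{same} curve in $\theta$ and cross in opposite directions, leaving $N_<$ constant. This is not a hypothetical pathology --- it is exactly what happens at $E_0=0$ for $p,q$ even: the level set $\{\omega=0\}$ consists almost entirely of regular points of $\omega$ (the saddles are just two points), every sheet crosses transversally there, and yet $N_<$ never jumps, because the momentum shift $k\mapsto k+(\pi,\pi)$ pairs sheets as $f\leftrightarrow -f$, so the zero-level curves of paired sheets coincide identically. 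Your argument uses only regularity of $E_0$, so applied at $E_0=0$, $\p=(2,2)$, it would ``prove'' that $0$ is robustly covered, contradicting Kr\"uger's example $V^\delta_{n,m}=\delta(-1)^{n+m}$, which opens a gap at $0$ for arbitrarily small $\delta$. The missing content is to show that for $E_0\neq 0$ there is a point of the folded Fermi surface where \emph{exactly one} sheet attains $E_0$ --- i.e.\ to rule out coincident level curves of distinct sheets and, at the zone corners, the compensating splittings of highly degenerate eigenvalues (the paper's exceptional set $\CC_\Gamma$, visible as the ``tendrils'' in its Figure 3). The paper handles this by a parity argument --- at $(0,0)$ the number of eigenvalues below $E$ is odd while at $(\pi,\pi)$ it is even, by the multiplicity Lemma~\ref{lem:2deigmults} --- for $E\notin\CC_\Gamma$, and by a degenerate perturbation along the direction $((1+\delta)\e,\e)$ controlled by the derivative formula $|\lambda_j'(\phi)|=\frac{1}{r}\sqrt{4-\lambda_j(\phi)^2}$ for $E\in\CC_\Gamma\setminus\{0\}$. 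Some argument of this kind, in which $E_0\neq 0$ actually enters, must replace your one-line transversality claim.

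In part (2) you correctly isolate the crux but leave it unproven, and the mechanism you propose does not suffice even in outline. Nonvanishing of an odd spectral moment of the fiber (which a winding-walk computation would indeed give, e.g.\ $\tr\bigl((\Delta^\Gamma_{\theta,\varphi})^p\bigr)\propto\cos\theta$ for $p$ odd, $q$ even) shows the fiber spectrum is not symmetric about $0$, but asymmetry of the multiset does \emph{not} imply $N_<\neq N_>$: the multiset $\{-1,2\}$ is asymmetric about $0$ yet has $N_<=N_>=1$. What your scheme actually requires is $N_<(\theta)\neq pq/2$ at a point carrying no zero eigenvalue. When $p$ and $q$ are both odd this is trivial ($pq$ is odd), so the substantive case is $p$ odd, $q$ even --- precisely where the paper's Proposition~\ref{prop:nogap:zero} works: after enlarging $q$ so that $4\,|\,q$, it perturbs around $(\pi/2,0)$, shows the multiplicity of $0$ in $\sigma(\Delta^\Gamma_{\pi/2,0})$ is $\equiv 2 \bmod 4$, and uses Lemma~\ref{lem:cornerderivatives} to see that the degeneracy splits unevenly: the quartets coming from paired eigenvalues cross $0$ together, while the simple middle eigenvalue $\lambda^p_{(p+1)/2}(\pi/2)=0$ of the odd factor crosses alone, so the eigenvalue counts on the two sides of $\theta=\pi/2$ differ by $2$ modulo $4$. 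A counting statement of this precision, not a moment computation, is what must close your gap before the uniform-constant argument can run.
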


This result can be regarded as a discrete version of the Bethe--Sommerfeld conjecture (in dimension two), which posits that the spectrum of the operator
$-\nabla^2 + V$
acting in $L^2(\R^d)$ ($d \geq 2$) contains a half-line whenever $V$ is periodic in the sense that there exists a rank-$d$ lattice $\Lambda \subset \R^d$ such that
\[
V(x+\gamma) = V(x)
\text{ for all } x \in \R^d, \gamma \in \Lambda.
\]
In particular, the analysis of the discrete operator $H = \Delta + \lambda V$ with $\lambda$ small mirrors that of the high-energy regime of the (unbounded) operator $-\nabla^2 + V$ in $L^2(\R^d)$. The Bethe--Sommerfeld conjecture has inspired intense study, with substantial contributions from many authors, including (but certainly not limited to) \cite{HelMoh98,Karp97,PopSkr81,Skr79,Skr84,Skr85,Vel88}, and culminating in the paper of Parnovskii \cite{Parn2008AHP}. However, our proof techniques here are  a bit different than those used in the continuum setting. In particular, we employ a pair of soft arguments: one to count eigenvalues, and one to prove that the eigenvalue counts forbid small potentials from opening gaps at nonzero energies. 
These soft arguments must be refined on a finite exceptional set using perturbation theory for degenerate eigenvalues, showing that gaps cannot form at such energies.

On the discrete side, Kr\"uger proved part (2) of Theorem~\ref{t:discreteBSconj} under the more restrictive assumption $\mathrm{gcd}(p,q) = 1$ \cite{KrugPreprint}. He also constructed examples with $\p = (2,2)$ for which $\sigma(\Delta+V)$ contains two intervals for arbitrarily small $V$. In fact, with
\[
V^\delta_{n,m}
=
\delta(-1)^{n+m},
\quad
\delta > 0, \; n,m\in\Z,
\]
he shows that $\sigma(\Delta+V^\delta)\cap (-\delta,\delta) = \emptyset$. Thus, our result improves the result of \cite{KrugPreprint} to incorporate the optimal range of validity vis-\`a-vis arithmetic conditions on $\p$. Moreover, our proof is substantially simpler than Kr\"uger's proof of \cite[Theorem~6.1]{KrugPreprint}, as he uses some sophisticated algebraic tools (cf.\ \cite[Section~5]{KrugPreprint}). 
Finally, in the course of the proof, we answer Questions~6.2 and~6.4 in \cite{KrugPreprint}.
Question~6.2 asks for optimal conditions on $p$ and $q$ so that the conclusion of part~2 of Theorem~\ref{t:discreteBSconj} holds; we prove that $\gcd(p,q)$ odd suffices. Question~6.4 asks whether there exists another mechanism by which one may open gaps in higher dimensions at small coupling; our arguments answer this question in the negative.
\medskip

One immediate consequence of Theorem~\ref{t:discreteBSconj} is that it is much more difficult to produce Cantor spectrum in high dimensions. For example, Theorem~\ref{t:discreteBSconj} immediately implies that if a sequence of periodic potentials converges sufficiently rapidly, then the spectrum of the resulting limit-periodic operator can have at most two connected components. Again, this draws a strong contrast with one-dimensional limit-periodic operators, which generically exhibit zero-measure Cantor spectrum \cite{Avi09,DamFilLuk15,FilOng16}.

\begin{coro} \label{coro:lp:application}
Suppose $\p_j$ is a sequence of periods such that $\p_j | \p_{j+1}$ for all $j$ {\rm(}in the sense that each component of $\p_j$ divides the corresponding component of $\p_{j+1}${\rm)}. There exist $\delta_j > 0$ with the following property. If $V_j$ is a $\p_j$-periodic potential with $\|V_j\|_\infty \leq \delta_j$ for all $j$, and
\[
V
=
\sum_{j=1}^\infty V_j,
\]
then $\sigma(\Delta + V)$ consists of at most two intervals. If at least one coordinate of $\p_j$ is odd for every $j$, then $\sigma(\Delta+V)$ is an interval.
\end{coro}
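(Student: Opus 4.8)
The plan is to approximate the limit-periodic operator $\Delta + V$ by genuinely periodic operators and then transfer the conclusion of Theorem~\ref{t:discreteBSconj} through a spectral limit. Set $W_N = \sum_{j=1}^N V_j$. Because $\p_j \mid \p_{j+1}$, transitivity gives $p_j \mid p_N$ and $q_j \mid q_N$ for every $j \le N$, so each $V_j$ with $j \le N$ is not only $\p_j$-periodic but also $\p_N$-periodic; hence $W_N$ is $\p_N$-periodic. First I would fix the thresholds: choose $\delta_j > 0$ small enough that $\sum_{i=1}^j \delta_i \le C_{\p_j}$ for every $j$, with $C_{\p_j}$ the constant produced by Theorem~\ref{t:discreteBSconj}, and so that $\sum_j \delta_j < \infty$. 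Then for any admissible potentials one has $\norm{W_N}_\infty \le \sum_{i=1}^N \delta_i \le C_{\p_N}$, so Theorem~\ref{t:discreteBSconj}(1) applies to each $W_N$ and $\sigma(\Delta + W_N)$ has at most two connected components; under the extra hypothesis that each $\p_j$ has an odd coordinate, part~(2) applies at every $N$ and each $\sigma(\Delta + W_N)$ is a single interval.

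Next I would pass to the limit. Since $\norm{V - W_N}_\infty \le \sum_{j > N}\delta_j \to 0$, the self-adjoint operators $\Delta + W_N$ converge to $\Delta + V$ in operator norm. For self-adjoint operators the spectrum is Hausdorff-continuous in the operator, $d_{\mathrm H}(\sigma(\Delta + W_N), \sigma(\Delta + V)) \le \norm{W_N - V}_\infty$, so $\sigma(\Delta + W_N) \to \sigma(\Delta + V)$ in the Hausdorff metric on compact subsets of $\R$.

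It remains to check that this limit cannot create new components. I would isolate the following topological lemma: if compact sets $A_N \subseteq \R$ each have at most $k$ connected components and $A_N \to A$ in the Hausdorff metric, then $A$ has at most $k$ components. To prove it, write $A_N = \bigcup_{i=1}^k B_{N,i}$ with each $B_{N,i}$ compact and connected (repeating pieces if there are fewer than $k$); all sets lie in a fixed compact interval, so by the Blaschke selection theorem I may pass to a subsequence along which $B_{N,i} \to B_i$ for each $i$. A Hausdorff limit of connected compact sets is connected, since a separation of the limit into two sets at positive distance would force every sufficiently close $B_{N,i}$ to be separated as well, and Hausdorff limits commute with finite unions; hence $A = \bigcup_{i=1}^k B_i$ is a union of $k$ connected sets and has at most $k$ components. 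Applying this with $k = 2$ (resp.\ $k = 1$) finishes the proof, using that a connected compact subset of $\R$ is a closed interval.

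The main obstacle is the compatibility of the thresholds with a positive choice of $\delta_j$: the recursion $\sum_{i \le j}\delta_i \le C_{\p_j}$ forces $C_{\p_j} > \sum_{i < j}\delta_i \ge \delta_1 > 0$ for all $j$, so I must control how fast $C_{\p_j}$ may shrink as the periods grow along the divisibility chain. If those constants stay bounded below the construction is immediate; otherwise I would instead invoke Theorem~\ref{t:discreteBSconj} only along a subsequence of indices on which $C_{\p_j}$ is bounded below, taking $\sum_j \delta_j$ smaller than that bound. This still suffices, because Hausdorff convergence of the full sequence $\sigma(\Delta + W_N)$ lets me read the component count off any subsequence via the lemma above. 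The topological lemma, though elementary, is the other place where care is needed: spectra are only upper semicontinuous under general perturbations, and it is the self-adjointness that supplies the two-sided Hausdorff control preventing components from splitting in the limit.
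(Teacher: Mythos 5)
Your limit machinery is sound: the bound $d_{\mathrm H}(\sigma(\Delta+W_N),\sigma(\Delta+V))\le\norm{W_N-V}_\infty$ for self-adjoint operators is correct (it is the same Lipschitz/Hausdorff continuity the paper alludes to via \cite[Lemma~3.9]{KrugPreprint}), and your lemma that a Hausdorff limit of compacta with at most $k$ components has at most $k$ components is correct as stated. The genuine gap is exactly the step you flagged and then patched incorrectly: to apply Theorem~\ref{t:discreteBSconj} to $W_N$ as a $\p_N$-periodic potential you need the \emph{head-sum} constraint $\sum_{i\le N}\delta_i\le C_{\p_N}$, which forces $0<\delta_1\le C_{\p_N}$ for the relevant $N$; your fallback of passing to a subsequence on which $C_{\p_j}$ is bounded below is vacuous precisely when $C_{\p_j}\to 0$, since then \emph{every} subsequence tends to zero. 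And nothing in the paper gives $\limsup_j C_{\p_j}>0$ along a divisibility chain with unbounded periods — the only case with real content. On the contrary, the constant produced by the paper's proof manifestly degenerates: $C_\p$ is extracted from the depth to which band-touching energies sit inside other bands of $\Delta$, the eigenvalue derivatives obey $|\lambda_j'(\phi)|=\frac1r\sqrt{4-\lambda_j(\phi)^2}$ (Lemma~\ref{lem:cornerderivatives}), and already the first touching energy $-4\cos(\pi/r)$ lies within $O(r^{-2})$ of the spectral bottom $-4$, so the interiority margin there — and with it the proof's $C_{(r,r)}$ — is $O(r^{-2})$. Since $\norm{W_N}_\infty$ can stay of size $\approx\delta_1>0$ for all $N$ (take $V_1$ any fixed nonconstant potential of norm $\delta_1$), Theorem~\ref{t:discreteBSconj} is simply never applicable as a black box to the partial sums, and your scheme cannot start.

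The missing idea is that the smallness conditions must bind the \emph{tails}, not the heads: one chooses the $\delta_j$ inductively so that, having fixed $\delta_1,\dots,\delta_N$, the admissible partial sums $W_N$ form a compact family over which a \emph{quantitative} form of the no-gap mechanism holds — every relevant energy lies in the interior of a band of $\Delta+W_N$ (equivalently, one has eigenvalue-count discrepancies at two points of the Brillouin zone, stable under perturbations) with a uniform margin $\rho_N>0$ — and one then imposes $\sum_{j>N}\delta_j\le\rho_N/2$, a constraint on future terms only, which is always satisfiable. This requires propagating the \emph{proof} of Theorem~\ref{t:discreteBSconj} (band interiority with explicit margins) through the stages, not merely its statement; that is why the paper proves the corollary by ``repeating the arguments'' of \cite[Theorem~7.1]{KrugPreprint} rather than by feeding the periodic theorem into a Hausdorff-limit argument as you do. Your Hausdorff/Blaschke endgame could then be grafted onto such an inductive construction essentially unchanged, but as written the proposal's quantitative loop does not close.
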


\begin{proof}
This follows from Theorem~\ref{t:discreteBSconj} by repeating the arguments that prove \cite[Theorem~7.1]{KrugPreprint} verbatim.
\end{proof}

In Section~\ref{sec:periodicreview}, we recall some necessary facts about discrete periodic operators, which we then use to prove Theorem~\ref{t:discreteBSconj} in Section~\ref{sec:discreteBSC}.

\section*{Acknowledgements}
The authors thank George Hagedorn for helpful conversations about this work.  J.F.\ is grateful to the Simons Center for Geometry and Physics for their hospitality during the program ``Between Dynamics and Spectral Theory'', during which portions of this work were completed. J.F.\ also thanks Robert Israel for a helpful and insightful answer to a relevant question on Mathematics Stack Exchange.

\section{Discrete Periodic Operators: A Brief Review} \label{sec:periodicreview}
Let us briefly review the relevant spectral characteristics of discrete periodic operators. In our arguments, we will need some particular facts about the discrete one-dimensional Laplacian, so we begin by collecting those.

\subsection{The Discrete Laplacian in Dimension One}

The discrete Laplacian on $\ell^2(\Z)$ is defined by
\[
[\Delta u]_n
=
u_{n-1} + u_{n+1},\quad
n \in \Z, \; u \in \ell^2(\Z).
\]
The analysis that follows comes from viewing $\Delta$ as a periodic Jacobi matrix; for more thorough discussions of periodic Jacobi matrices, see \cite{DF,simszego,teschljacobi}. Given $r \in \Z_+$ ($r\geq 3$) and $\theta \in \R$, denote by $\Delta^r_\theta$ the self-adjoint matrix
\[
\Delta_\theta^r
=
\begin{bmatrix}
0 & 1 &&& e^{-i\theta} \\ 
1 & 0 & 1 && \\
& \ddots & \ddots & \ddots & \\
&& 1 & 0 & 1 \\
e^{i\theta}  &&& 1 & 0
\end{bmatrix}
\in \C^{r\times r}.
\]
For $r = 1,2$, one has to be a little careful, defining
\[
\Delta_\theta^1
=
2\cos\theta,
\quad
\Delta_\theta^2
=
\begin{bmatrix}
0 & 1+e^{-i\theta} \\
1+e^{i\theta} & 0
\end{bmatrix}.
\]

\begin{prop} \label{p:1dlaplacianeigmults}
Let $r \in \Z_+$ be given. Then,
\begin{align*}
\sigma(\Delta_0^r)
& =
\set{2\cos\left(\frac{\pi j}{r}\right) : 0 \leq j \leq r \text{ and } j \text{ is even }} \\[.25em]
\sigma(\Delta_{\pi/2}^r)
& =
\set{2\cos\left(\frac{\pi j}{2r}\right) : 0 \leq j \leq 2 r \mbox{ and $j$ is odd }} \\[.25em]
\sigma(\Delta_\pi^r)
& =
\set{2\cos\left(\frac{\pi j}{r}\right) : 0 \leq j \leq r \text{ and } j \text{ is odd }}.
\end{align*}
For $\Delta_0^r$ and $\Delta_\pi^r$,  the eigenvalues $\pm 2$ are simple; the other eigenvalues all have multiplicity two.
All eigenvalues of $\Delta_{\pi/2}^r$ are simple.
\end{prop}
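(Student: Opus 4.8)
The plan is to diagonalize $\Delta_\theta^r$ explicitly by Bloch waves and then read off the spectrum and multiplicities by elementary trigonometry. First I would note that $\Delta_\theta^r$ is exactly the one-dimensional discrete Laplacian subject to the $\theta$-twisted boundary condition $u_{n+r} = e^{i\theta} u_n$: writing out the rows of the matrix shows that an eigenvector with eigenvalue $\lambda$ is a solution of the recurrence $u_{n-1} + u_{n+1} = \lambda u_n$ satisfying this quasiperiodicity. The plane waves $v^{(j)}_n = e^{ik_j n}$ with
\[
k_j = \frac{\theta + 2\pi j}{r}, \qquad j = 0, 1, \ldots, r-1,
\]
satisfy both the recurrence and the boundary condition, with eigenvalue $2\cos k_j$. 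By the usual discrete-Fourier orthogonality these $r$ vectors are mutually orthogonal, hence form an eigenbasis, so $\sigma(\Delta_\theta^r) = \set{2\cos k_j : 0 \le j \le r-1}$, the multiplicity of a value being the number of indices $j$ that realize it. (The degenerate conventions for $r = 1, 2$ are checked directly against this formula.)

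Next I would substitute $\theta \in \set{0, \pi/2, \pi}$ and simplify. For $\theta = 0$ one gets $k_j = \pi(2j)/r$, so the angles are even multiples of $\pi/r$; for $\theta = \pi$ one gets $k_j = \pi(2j+1)/r$, giving odd multiples; for $\theta = \pi/2$ one gets $k_j = \pi(4j+1)/(2r)$. Using that $t \mapsto 2\cos t$ is even and $2\pi$-periodic, each angle reduces to the canonical form $\pi\ell/r$ (resp.\ $\pi\ell/(2r)$) with $\ell$ in the stated range and of the stated parity, which produces exactly the three displayed sets.

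The multiplicity count rests on the identity that $2\cos k_j = 2\cos k_{j'}$ iff $k_j \equiv \pm k_{j'} \pmod{2\pi}$. The $+$ case forces $j = j'$ within the index range, so a value is doubly degenerate precisely when there is a \emph{distinct} index $j'$ with $k_{j'} \equiv -k_j$, and simple precisely when $k_j \equiv -k_j$, i.e.\ $k_j \in \set{0,\pi}$ and $\lambda = \pm 2$. For $\theta = 0$ the reflection is $j \mapsto r - j \pmod r$, with fixed points giving $\lambda = 2$ (at $j = 0$) and, when $r$ is even, $\lambda = -2$ (at $j = r/2$); all other indices pair into distinct doublets. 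For $\theta = \pi$ the reflection is $j \mapsto r - 1 - j$, whose only possible fixed point (requiring $r$ odd) gives $\lambda = -2$, while $\lambda = 2$ never occurs. For $\theta = \pi/2$, solving $k_{j'} \equiv -k_j$ and $k_j \equiv -k_j$ both lead to the parity contradiction that an odd integer would have to equal an even one (since $4j+1$ is odd), so neither a reflection partner nor a fixed point exists and every eigenvalue is simple.

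The main obstacle here is not conceptual but bookkeeping in the second and third steps: one must carefully track the reduction of the Bloch angles $(\theta + 2\pi j)/r$ into the canonical range to match the stated index sets, and, for the multiplicities, verify in each case precisely which reflected indices $-k_j$ fall back onto the lattice $\set{k_{j'}}$. The crucial arithmetic input — and the reason $\Delta_{\pi/2}^r$ behaves differently from the other two — is the parity obstruction ``odd $\ne$ even'' that simultaneously forbids fixed points and reflection partners when $\theta = \pi/2$.
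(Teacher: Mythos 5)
Your proposal is correct and takes essentially the same approach as the paper: both diagonalize $\Delta_\theta^r$ by explicit plane-wave (Bloch) eigenvectors and obtain the multiplicities by tracking coincidences among the resulting angles, including the same parity obstruction that makes every eigenvalue of $\Delta_{\pi/2}^r$ simple. If anything, your version is marginally more systematic, since building the full orthogonal eigenbasis $e^{ik_j n}$, $k_j = (\theta + 2\pi j)/r$, for general $\theta$ makes the exactness of the multiplicity count automatic, whereas the paper exhibits eigenvector pairs $e^{\pm i jk\pi/r}$ for the special values of $\theta$ and leaves the concluding dimension count implicit.
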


\begin{proof}
For each $j$ with $0 \leq j \leq r$, define the vectors $\vec v^{@@\pm}(j)$ by
\[
\vec v^{@@\pm}_k(j) = e^{\pm\frac{ijk\pi}{r}},
\quad
1 \le k \le r.
\]
One can readily verify that $\vec v^{@@\pm}(j)$ is an eigenvector of $\Delta_0^r$ for even $j$ and of $\Delta_\pi^r$ for odd $j$, corresponding to the eigenvalue $2\cos(\pi j/r)$. Moreover, for $0 < j < r$, $\vec v^{@+}(j)$ and $\vec v^{@-}(j)$ are linearly independent, which gives the desired statements on multiplicities of the eigenvalues of $\Delta_0^r$ and $\Delta_\pi^r$.

For $\Delta_{\pi/2}^r$ and $1 \le j \le r$, define $\theta_j = (-1)^j\pi(2j-1)/(2r)$, and put 
\[
\vec w_k(j) = e^{-i k \theta_j}
\quad
1 \le k \le r.
\]
One can check that $\vec w(j)$ is an eigenvector of $\Delta_{\pi/2}^r$ corresponding to the eigenvalue $2\cos(\theta_j)$ for each $1 \le j \le r$. These $r$ distinct points are precisely the eigenvalues given in the proposition.
\end{proof}

To handle exceptional energies in arguments that follow, we will use a perturbative analysis that involves the derivatives of the eigenvalues of $\Delta_\theta^r$ with respect to $\theta$.

\begin{lemma} \label{lem:cornerderivatives}
Fix $r \in \Z_+$, and denote the eigenvalues of $\Delta_\theta^r$ by
\[
\lambda_1(\theta)
\leq
\cdots
\leq 
\lambda_r(\theta).
\]
For every $1 \le j \le r$:
\begin{enumerate}
\item[{\rm (a)}] $\lambda_j$ is right-differentiable at $0$ and left-differentiable at~$\pi$;
\item[{\rm (b)}] for all $\theta \in (0,\pi)$, $\lambda_j$ is differentiable and 
$(-1)^{r-j}\lambda_j'(\theta) < 0$;
\item[{\rm (c)}] for $\phi \in \{0,\pi\}$, 
\begin{equation} \label{eq:eigderivs}
|\lambda_j'(\phi)|
=
\frac{1}{r}\sqrt{4-\lambda_j(\phi)^2}.
\end{equation}
\end{enumerate}
\end{lemma}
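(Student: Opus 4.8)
The plan is to diagonalize $\Delta_\theta^r$ explicitly by means of plane-wave (Floquet) eigenvectors and then read off all three assertions from closed-form expressions for the ordered eigenvalues. First I would verify, exactly as in the proof of Proposition~\ref{p:1dlaplacianeigmults}, that for each $k \in \{0,1,\dots,r-1\}$ the vector with entries $u_n = e^{in(\theta+2\pi k)/r}$ ($1 \le n \le r$) solves the eigenvalue equation for $\Delta_\theta^r$: the interior rows give the factor $2\cos(\cdot)$ automatically, while the two corner rows both reduce to the single condition $e^{i(r\phi-\theta)}=1$ for $\phi = (\theta+2\pi k)/r$. This shows that, counted with multiplicity, the spectrum of $\Delta_\theta^r$ is exactly $\{2\cos((\theta+2\pi k)/r) : 0 \le k \le r-1\}$; the special cases $r\in\{1,2\}$ can be matched against this same formula by hand.

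Next I would convert this unordered list into the increasingly ordered eigenvalues $\lambda_1(\theta)\le\cdots\le\lambda_r(\theta)$ for $\theta\in[0,\pi]$. Using that $t\mapsto 2\cos t$ is even and $2\pi$-periodic, each value $2\cos((\theta+2\pi k)/r)$ equals $2\cos\psi$ for a unique fold $\psi\in[0,\pi]$, and I would record these folds as the affine functions $\psi_j(\theta) = \bigl(2\pi\lceil (r-j)/2\rceil + (-1)^{r-j}\theta\bigr)/r$, so that $\lambda_j(\theta) = 2\cos\psi_j(\theta)$. The crucial bookkeeping step is to check that, as $\theta$ runs over $[0,\pi]$, the function $\psi_j$ sweeps out exactly the subinterval $\bigl[\tfrac{(r-j)\pi}{r},\tfrac{(r-j+1)\pi}{r}\bigr]$; since these $r$ intervals tile $[0,\pi]$ with disjoint interiors and $2\cos$ is strictly decreasing there, this simultaneously confirms that the $\psi_j$ are correctly matched to the increasing order of the $\lambda_j$ and that the eigenvalues are simple on the open interval $(0,\pi)$.

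With these formulas in hand the three claims follow by differentiation. Because each $\psi_j$ is affine (so $\lambda_j$ is real-analytic on the closed interval $[0,\pi]$), $\lambda_j$ is in particular right-differentiable at $0$ and left-differentiable at $\pi$, giving (a); here I would also remark that, extended to all of $\R$, $\lambda_j$ is even and $2\pi$-periodic with eigenvalue collisions at $0$ and $\pi$, which is precisely why only one-sided derivatives are asserted at those endpoints. Differentiating yields $\lambda_j'(\theta) = -\tfrac{2}{r}(-1)^{r-j}\sin\psi_j(\theta)$; since $\psi_j(\theta)\in(0,\pi)$ whenever $\theta\in(0,\pi)$, we have $\sin\psi_j>0$ there, so $(-1)^{r-j}\lambda_j'(\theta)<0$, which is (b). Finally, for $\phi\in\{0,\pi\}$ one has $\psi_j(\phi)\in[0,\pi]$, whence $|\lambda_j'(\phi)| = \tfrac{2}{r}\sin\psi_j(\phi) = \tfrac{1}{r}\sqrt{4-4\cos^2\psi_j(\phi)} = \tfrac{1}{r}\sqrt{4-\lambda_j(\phi)^2}$, which is (c). I expect the only genuinely delicate point to be the folding-and-ordering bookkeeping of the middle paragraph—pairing each affine branch $\psi_j$ with the correct index $j$ and verifying the interval-tiling—whereas the diagonalization and the final differentiation are routine once the explicit spectrum is available.
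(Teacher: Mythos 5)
Your proof is correct, but it follows a genuinely different route from the paper's. The paper never diagonalizes $\Delta_\theta^r$ for this lemma: parts (a) and (b) are imported from the literature (strict monotonicity of band functions for periodic Jacobi matrices, \cite[Theorem~5.3.4]{simszego}, plus Kato's $C^1$ continuation of eigenvalue branches through $\theta=0,\pi$, \cite[Theorem~II.6.8]{Kato}), and part (c) is extracted from the discriminant $D(z)=\tr(T_z^r)$ by differentiating the identity $D(\lambda_j(\theta))=2\cos\theta$ twice and evaluating $D''$ at its critical values via the Chebyshev identity $D(2\cos\eta)=2\cos(r\eta)$, with the simple eigenvalues $\pm 2$ handled separately by Feynman--Hellmann (where the derivative vanishes). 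You instead obtain the closed-form branches $\lambda_j(\theta)=2\cos\psi_j(\theta)$ with $\psi_j$ affine of slope $(-1)^{r-j}/r$, and I verified your bookkeeping: the assignment $k=(r-j)/2$ when $r-j$ is even and $k=r-(r-j+1)/2$ when $r-j$ is odd is a bijection onto $\{0,\dots,r-1\}$, each $\psi_j$ maps $[0,\pi]$ onto $\bigl[\tfrac{(r-j)\pi}{r},\tfrac{(r-j+1)\pi}{r}\bigr]$, these intervals tile $[0,\pi]$, so the sorted labelling is as you claim and (a)--(c) drop out by differentiation --- notably with the band edges $\lambda_j(\phi)=\pm 2$ requiring no special case, since $\sin\psi_j(\phi)=0$ there. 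What each approach buys: yours is fully self-contained (it re-derives Proposition~\ref{p:1dlaplacianeigmults} and simplicity of the eigenvalues on $(0,\pi)$ as by-products, and replaces two citations plus a Feynman--Hellmann computation by a single formula), but it exploits that $\Delta$ has constant coefficients; the paper's identity $D(\lambda_j(\theta))=2\cos\theta$ is the mechanism that generalizes to arbitrary periodic Jacobi matrices, where explicit plane-wave eigenvectors are unavailable (only the Chebyshev evaluation of $D''$ is special to the free Laplacian). One small step you should make explicit: to conclude that your $r$ plane waves exhaust the spectrum \emph{with multiplicity}, you need their linear independence, which follows from a Vandermonde argument because the phases $e^{i(\theta+2\pi k)/r}$, $0\le k\le r-1$, are pairwise distinct.
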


\begin{proof}
That $\lambda_j$ is differentiable (even real-analytic) on $(0,\pi)$ with $(-1)^{r-j} \lambda_j' < 0$ thereupon is well-known \cite[Theorem~5.3.4]{simszego}. Moreover, by general eigenvalue perturbation theory, it is known that $\lambda_j$ enjoys a continuously differentiable extension through the points $0$ and $\pi$; see, e.g.\ \cite[Theorem~II.6.8]{Kato}. Thus, we need only concentrate on proving \eqref{eq:eigderivs}. We will prove this in the case when $r$ is even. The proof for odd $r$ is identical, except $-2 = \lambda_1(\pi)$ instead of $\lambda_1(0)$.
\medskip

Let $D$ denote the associated discriminant, defined by
\[
D(z)
=
\tr(T_z^{@r}),
\quad
T_z = \begin{bmatrix}
z & - 1 \\ 1 & 0
\end{bmatrix},
\quad z \in \C.
\]
($T_z^{@r}$ denotes the $r$th power of the matrix $T_z$.)
Given a normalized eigenvector $\vec w$ of $\Delta_\theta^r$ corresponding to the eigenvalue $\lambda_j(\theta)$ of $\Delta_\theta^r$, it is straightforward to verify that
\[
T_{\lambda_j(\theta)}^{@r} \begin{bmatrix} \vec w_2 \\ \vec w_1 \end{bmatrix}
=
e^{i\theta}\begin{bmatrix} \vec w_2 \\ \vec w_1 \end{bmatrix},
\]
and hence, since $\det(T_z)=1$,
\begin{equation} \label{eq:Dlambdaj}
D(\lambda_j(\theta)) = 2\cos\theta
\text{ for all }
\theta \in [0,\pi].
\end{equation}
By a straightforward induction, one can check that
\begin{equation}\label{eq:Dchebyshev}
D(2\cos\eta) = 2\cos(r\eta)
\text{ for every } \eta \in [0,\pi].
\end{equation}
Concretely, it is easy to verify that \eqref{eq:Dchebyshev} holds when $r = 1,2$. Inductively, if \eqref{eq:Dchebyshev} holds for $r$ and $r-1$, then, by the Cayley--Hamilton theorem, 
\begin{align*}
\tr(T_{2\cos(\eta)}^{r+1})
&=
2\cos(\eta) \tr(T_{2\cos(\eta)}^r) - \tr(T_{2\cos(\eta)}^{r-1}) \\
& =
4\cos(\eta) \cos(r\eta) - 2 \cos ((r-1)\eta) \\
& =
2\cos((r+1)\eta).
\end{align*}
In view of \eqref{eq:Dchebyshev}, every point of the form $2\cos(\pi m/r)$ with $0 < m < r$ an integer is a critical point of $D$. Hence, every eigenvalue of $\Delta_0$ or $\Delta_\pi$ except $\pm 2$ is a critical point of $D$. Differentiate both sides of \eqref{eq:Dlambdaj} twice (with respect to $\theta$) to obtain
\[
D''(\lambda_j(\theta))\lambda_j'(\theta)^2 + D'(\lambda_j(\theta)) \lambda_j''(\theta)
=
-2\cos\theta.
\]
Since, when $j\ne 1, r$,  $\lambda_j(0)$ is a critical point of $D$, we deduce
\[
D''(\lambda_j(0)) \lambda_j'(0)^2 = -2.
\]
Consequently,
\[
|\lambda_j'(0)|
=
\sqrt{-\frac{2}{D''(\lambda_j(0))}},
\]
for $1<j<r$. Similarly,
\[
|\lambda_j'(\pi)|
=
\sqrt{\frac{2}{D''(\lambda_j(\pi))}}
\]
for all $1 \leq j \leq r$. Thus, we need to compute $D''$ at the critical points of $D$. Differentiate \eqref{eq:Dchebyshev} twice with respect to $\eta$ and plug in $\eta = \pi m/r$ with $1 \leq m \leq r-1$ an integer to get
\[
D''\left(2\cos\left(\frac{\pi m}{r}\right)\right)
=
(-1)^{m+1}\frac{r^2}{2\sin^2(\pi m/r)}.
\]
Thus, we obtain \eqref{eq:eigderivs} for $\phi=0$ and $1 < j < r$, as well as for $\phi = \pi$ and $1\le j\le r$.

 It remains to check the derivative at the eigenvalues $\pm 2$: for even $r$, this amounts to showing that $\lambda_1'(0) = \lambda_r'(0)=0$.  We can explicitly compute the derivative at those points using first-order perturbation theory for simple eigenvalues. Concretely, $\vec w_j \equiv 1$ supplies an eigenvector of $\Delta_0^r$ corresponding to the eigenvalue~$2$. An explicit calculation gives

\[
\frac{\partial}{\partial \theta} \Delta_\theta^r
=
i \left( e^{i\theta}\vec e_r^{} \vec e_1^\top - e^{-i\theta}\vec e_1^{} \vec e_r^\top \right),
\]
so, by the Feynman--Hellmann theorem (see \cite[Theorem~1.4.7]{SimCCA4} or \cite[Chapter~II]{Kato}), we get
\[
\lambda_r'(0) 
= 
i\vec w^\top 
\left( \vec e_r^{} \vec e_1^\top - \vec e_1^{} \vec e_r^\top \right)
\vec w
= 
0.
\]
Similar considerations work for the eigenvalue $-2$ using $\vec u_j = (-1)^j$, which is an eigenvector of $\Delta_0^r$ since $r$ is even. When $r$ is odd, the proof is identical except that $\vec u$ is an eigenvector of $\Delta_\pi^r$ instead of $\Delta_0^r$.
\end{proof}

\begin{remark}
The identity \eqref{eq:Dchebyshev} shows that $D$ is a (rescaled) Chebyshev polynomial. This is a special case of a more general fact for periodic Jacobi matrices; see, e.g.\ \cite[Example~5.7.3]{simszego}.
\end{remark}

\subsection{Periodic Operators in Dimension Two} We briefly recall the main tools that we will need; for a more complete review and  enjoyable reading, see \cite{KrugPreprint,Kuchment2016BAMS}. We consider operators $H$ of the form \eqref{eq:2dso} where the potential is $\p =(p,q)$-periodic in the sense of \eqref{eq:2dperpot:def}. In this situation, we can compute the spectrum of $H$ using a direct integral decomposition as follows. Let $\Gamma$ denote the fundamental domain
\[
\Gamma
=
\Gamma_\p
\eqdef 
\big([0,p)\times [0,q) \big) \cap \Z^2.
\]
The fibers of the direct integral are given by
\[
\Hi_\Gamma 
= 
\C^\Gamma
\eqdef
\set{ (\psi_{n,m}) : \psi_{n,m} \in \C \text{ for each } (n,m) \in \Gamma}.
\] 
For each $\theta, \varphi \in [0,\pi]$, let $H_{\theta,\varphi}^\Gamma$ be the operator given by restricting $H$ to $\Hi_\Gamma$ with boundary conditions of phase $\theta$ at the vertical boundaries and phase $\varphi$ on the horizontal boundaries. Concretely,
\[
\left[H_{\theta,\varphi}^\Gamma \psi \right]_{n,m}
=
\left[ H \psi^{\theta,\varphi} \right]_{n,m},
\quad
(n,m) \in \Gamma,
\]
where $\psi^{\theta,\varphi}:\Z^2 \to \C$ is defined by the conditions $\psi^{\theta,\varphi}_{n,m} = \psi_{n,m}$ if $(n,m) \in \Gamma$, and 
\begin{equation} \label{eq:2dBCs}
\psi^{\theta,\varphi}_{n+p,m}=
e^{i\theta} \psi^{\theta,\varphi}_{n,m},
\qquad
\psi^{\theta,\varphi}_{n,m+q}=
e^{i\varphi} \psi^{\theta,\varphi}_{n,m}
\quad
\text{for all } n,m \in \Z.
\end{equation}
Equivalently, one can define the space
\[
\ell^\infty_{\Gamma,\theta,\varphi}(\Z^2)
=
\set{\psi \in \ell^\infty(\Z^2) : \text{ \eqref{eq:2dBCs} holds}},
\]
which is isomorphic to $\mathcal H_\Gamma$ in a canonical fashion. Under this isomorphism, $H_{\theta,\varphi}^\Gamma$ coincides with the restriction of $H$ to $\ell^\infty_{\Gamma,\theta,\varphi}(\Z^2)$.

We denote the eigenvalues of $H_{\theta,\varphi}^\Gamma$ (counted with multiplicity) by
\[
\lambda_1(\theta,\varphi)
\leq 
\lambda_2(\theta,\varphi)
\leq
\cdots
\leq
\lambda_{pq}(\theta,\varphi).
\]
Together, the spectra of the $H_{\theta,\varphi}^\Gamma$ operators give a nice characterization of the spectrum of $H$.

\begin{theorem} \label{t:2dperspec}
If $V$ is $\p$-periodic, then
\[
\sigma(H)
=
\bigcup_{\theta,\varphi \in [0,\pi]} \sigma\!\left( H^{\Gamma}_{\theta,\varphi} \right)
=
\bigcup_{j=1}^{pq} B_j,
\]
where $\Gamma = \Gamma_\p$ and
\[
B_j
=
\set{\lambda_j(\theta,\varphi) : \theta,\varphi \in [0,\pi]}
\]
is called the $j$th band of $\sigma(H)$.
\end{theorem}

\begin{proof}
This result is standard; for a proof in the discrete setting, see \cite[Theorem~3.3]{KrugPreprint}.
\end{proof}

As a consequence of Proposition~\ref{p:1dlaplacianeigmults}, we can compute the multiplicities of the eigenvalues of $\Delta_{0,0}^\Gamma$ and $\Delta_{\pi,\pi}^\Gamma$ explicitly on square lattices. Here and throughout, we use $\Delta_{\theta,\varphi}^\Gamma$ to denote $H_{\theta,\varphi}^\Gamma$ with $V \equiv 0$.

\begin{lemma}\label{lem:2deigmults}
Let $\Gamma = \Gamma_{(r,r)}$ with $r$ even.  Then
\begin{enumerate}
\item[{\rm(a)}] $\pm 4$ are simple eigenvalues of $\Delta_{0,0}^\Gamma$;
\item[{\rm(b)}] $0$ is an eigenvalue of $\Delta_{0,0}^\Gamma$ with multiplicity congruent to two modulo four;
\item[{\rm(c)}] every other eigenvalue of $\Delta_{0,0}^\Gamma$ has multiplicity divisible by four;
\item[{\rm(d)}] every eigenvalue of $\Delta_{\pi,\pi}^\Gamma$ has multiplicity divisible by four.
\end{enumerate}
\end{lemma}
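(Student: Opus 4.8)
The plan is to exploit the separable structure of the free operator on the square fundamental domain. Since $\Gamma = \Gamma_{(r,r)}$ is a product, identifying $\C^\Gamma$ with $\C^r \otimes \C^r$ (via $\psi_{n,m} = u_n v_m$) turns the boundary-twisted Laplacian into a sum of commuting one-dimensional pieces,
\[
\Delta_{\theta,\varphi}^\Gamma = \Delta_\theta^r \otimes I + I \otimes \Delta_\varphi^r .
\]
First I would record an explicit eigenbasis. Writing $w^{(\ell)}_k = e^{i\pi\ell k/r}$, Proposition~\ref{p:1dlaplacianeigmults} shows that $\{w^{(\ell)} : \ell \text{ even},\ 0 \le \ell < 2r\}$ is an eigenbasis of $\Delta_0^r$ and $\{w^{(\ell)} : \ell \text{ odd},\ 0 \le \ell < 2r\}$ is one for $\Delta_\pi^r$, with eigenvalue $2\cos(\pi\ell/r)$; moreover $w^{(\ell)}$ and $w^{(2r-\ell)}$ share an eigenvalue, coinciding precisely when $\ell \in \{0,r\}$, which are exactly the simple eigenvalues $\pm 2$. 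The tensor products $w^{(\ell)} \otimes w^{(\ell')}$ then form an eigenbasis of $\Delta_{\theta,\varphi}^\Gamma$ for $(\theta,\varphi) \in \{0,\pi\}^2$, with eigenvalue $E(\ell,\ell') = 2\cos(\pi\ell/r) + 2\cos(\pi\ell'/r)$, so the multiplicity of an energy $E$ equals the cardinality of $S_E = \{(\ell,\ell') : E(\ell,\ell') = E\}$.

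The key device is the symmetry group $G = \langle \nu_1, \nu_2 \rangle \cong (\Z/2\Z)^2$ acting on index pairs by $\nu_1 : \ell \mapsto -\ell$ and $\nu_2 : \ell' \mapsto -\ell'$ (mod $2r$), together with the coordinate swap $\sigma$; all three preserve $E$ and hence act on $S_E$. Since $\nu_1$ fixes the first coordinate exactly when $\ell \in \{0,r\}$, a pair lies in a $G$-orbit of size $4$ unless one of its coordinates is in $\{0,r\}$ (orbit size $2$) or both are (orbit size $1$). For $\Delta_{\pi,\pi}^\Gamma$ every index is odd, so no coordinate equals $0$ or $r$, every $G$-orbit has size $4$, and hence $|S_E|$ is divisible by $4$ for all $E$; this is part~(d).

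For $\Delta_{0,0}^\Gamma$ the indices are even, and the only size-$1$ orbits are $(0,0)$, $(r,r)$, $(0,r)$, $(r,0)$, with values $4, -4, 0, 0$. Since $E(\ell,\ell')=4$ forces $\ell=\ell'=0$ and $E=-4$ forces $\ell=\ell'=r$, we get $|S_{\pm4}|=1$, which is part~(a). For part~(c) an energy $E \neq 0, \pm 4$ admits no size-$1$ orbit, and I would show its size-$2$ orbits pair off: those of ``type~A'' (first coordinate in $\{0,r\}$) are carried bijectively by $\sigma$ onto the disjoint family of ``type~B'' orbits (second coordinate in $\{0,r\}$), so each swap-pair contributes $2+2=4$; together with the size-$4$ orbits this makes $|S_E|$ divisible by $4$. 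For part~(b), the same bookkeeping at $E=0$ shows there are no size-$2$ orbits (they would force $2\cos(\pi\ell'/r)=\pm 2$ with $\ell'\notin\{0,r\}$), so $|S_0|$ equals the size-$1$ contribution $2$ plus a multiple of $4$, i.e.\ $\equiv 2 \pmod 4$.

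The main obstacle is the orbit bookkeeping in part~(c): one must verify that the negation symmetries have fixed coordinates exactly at the one-dimensional simple eigenvalues $\pm 2$, and then check that $\sigma$ genuinely pairs the two families of size-$2$ orbits with no fixed orbit and no double counting, so that their total count is a multiple of $4$. Everything else reduces to reading off orbit sizes from the $(\Z/2\Z)^2$-action.
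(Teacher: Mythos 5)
Your proof is correct and is essentially the paper's argument: the paper proves Lemma~\ref{lem:2deigmults} by a one-line appeal to Proposition~\ref{p:1dlaplacianeigmults} through the separable structure $\Delta_{\theta,\varphi}^\Gamma \cong \Delta_\theta^r \otimes \I + \I \otimes \Delta_\varphi^r$, and your $(\Z/2\Z)^2$-orbit bookkeeping is precisely the multiplicity count (eigenvalues $\pm 2$ simple, all others doubly degenerate) that the paper leaves implicit. Your handling of the size-one orbits and the swap-paired size-two orbits correctly isolates $\pm 4$ as simple and $0$ as having multiplicity congruent to two modulo four, exactly as stated.
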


\begin{proof}
These observations follow immediately from Proposition~\ref{p:1dlaplacianeigmults}.
\end{proof}

We will frequently have recourse to a specific implication of this Lemma:
all eigenvalues of $\Delta_{\pi,\pi}^\Gamma$ have even multiplicity,
while all eigenvalues of $\Delta_{0,0}^\Gamma$ have even multiplicity 
\emph{except the extreme eigenvalues $\pm 4$, which are simple}.

\medskip
One may identify $\Hi_\Gamma$ with $\C^{pq}$ via the ``vectorization'' map $\vect : \Hi_\Gamma \to \C^{pq}$ defined by
\[
\psi \mapsto \vec v = \vect(\psi),
\qquad
\vec v_{kq+\ell+1}
=
\psi_{k,\ell}
\mbox{\ \ for }
(k,\ell) \in \Gamma.
\] 
In view of this identification, $H_{\theta,\varphi}^\Gamma$ enjoys the matrix representation
\[
\vect\circ H_{\theta,\varphi}^\Gamma \circ \vect^{-1}
=
\begin{bmatrix}
H_\varphi^{(0)} & \I &&& e^{-i\theta} \I \\ 
\I & H_\varphi^{(1)} & \I && \\
& \ddots & \ddots & \ddots & \\
&& \I & H_\varphi^{(p-2)} & \I \\
e^{i\theta} \I &&& \I & H_\varphi^{(p-1)}
\end{bmatrix} \in \C^{pq\times pq},
\]
where
\[
H_\varphi^{(j)}
=
\begin{bmatrix}
V_{j,0} & 1 &&& e^{-i\varphi} \\ 
1 & V_{j,1} & 1 && \\
& \ddots & \ddots & \ddots & \\
&& 1 & V_{j,q-2} & 1 \\
e^{i\varphi}  &&& 1 & V_{j,q-1},
\end{bmatrix} \in \C^{q\times q}.
\]

\section{Proof of Theorem} \label{sec:discreteBSC}

\subsection{Proof Strategy}

Given $\p = (p,q)$, let $\Gamma = \Gamma_\p$ and view $\Delta$ as a $\p$-periodic operator. For each $(\theta,\varphi) \in \B \eqdef [0,\pi]^2$, we denote the eigenvalues of $\Delta^\Gamma_{\theta,\varphi}$ by
\[
\lambda_1^\Gamma(\theta,\varphi)
\le 
\cdots
\le 
\lambda_{pq}^\Gamma(\theta,\varphi).
\]
The $j$th \emph{band} of $\sigma(\Delta)$ is then
\[
B_j^\Gamma
=
\set{\lambda_j^\Gamma(\theta,\varphi) : (\theta,\varphi) \in \B}.
\]
Of course, we can eschew the calculation of bands and compute the spectrum of $\Delta$ directly, as it is diagonalized by the Fourier transform on $\Z^2$. One can check that
\[
\sigma(\Delta) = [-4,4].
\]
Then, in view of Theorem~\ref{t:2dperspec}, we deduce that
\begin{equation} \label{eq:laplacebandstouch}
\max B_j^\Gamma
\geq
\min B_{j+1}^\Gamma
\text{ for every }
1 \le j < pq.
\end{equation}

\begin{figure}[b!]
\includegraphics[scale=0.39]{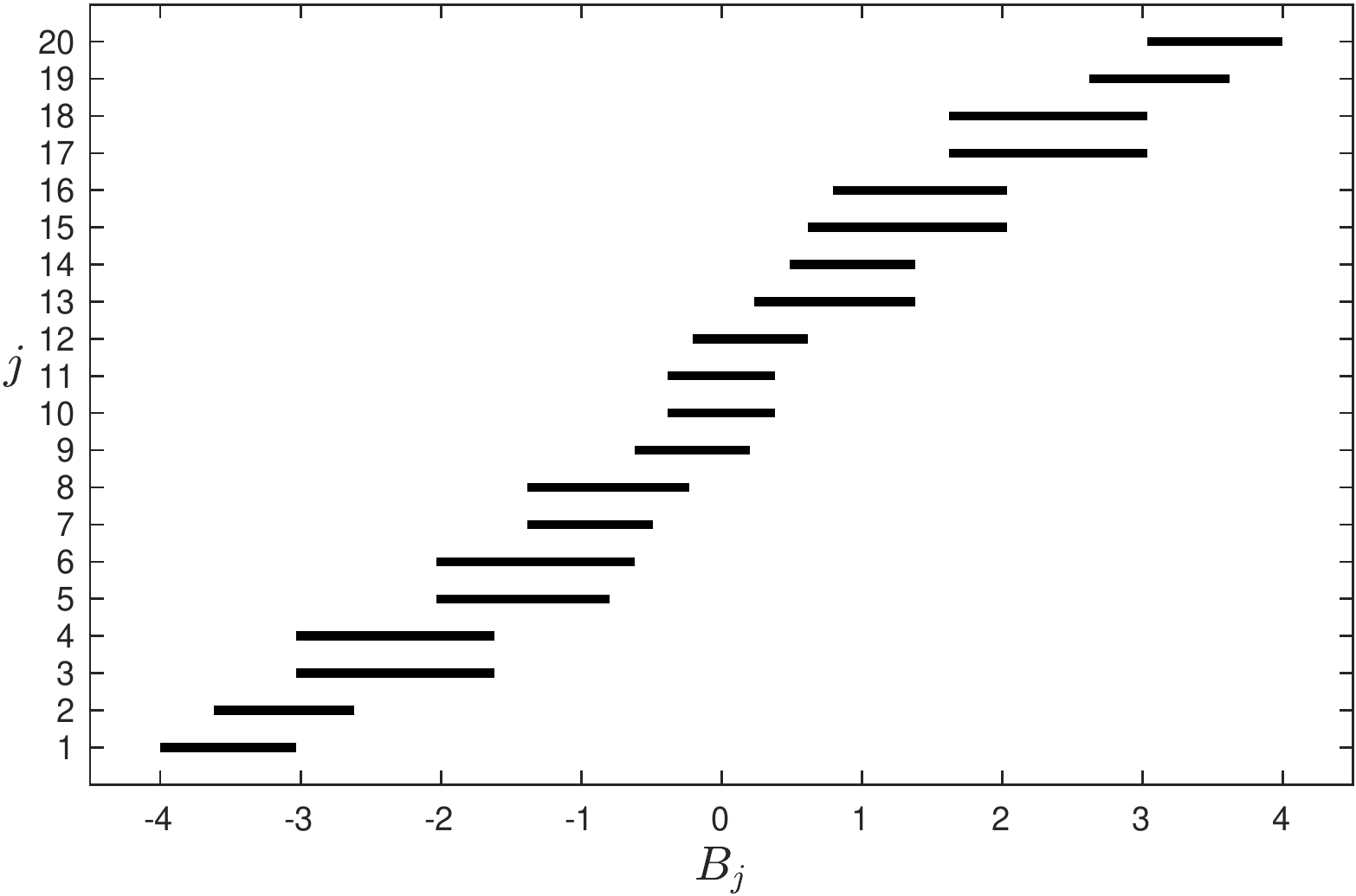}\ \  
\includegraphics[scale=0.39]{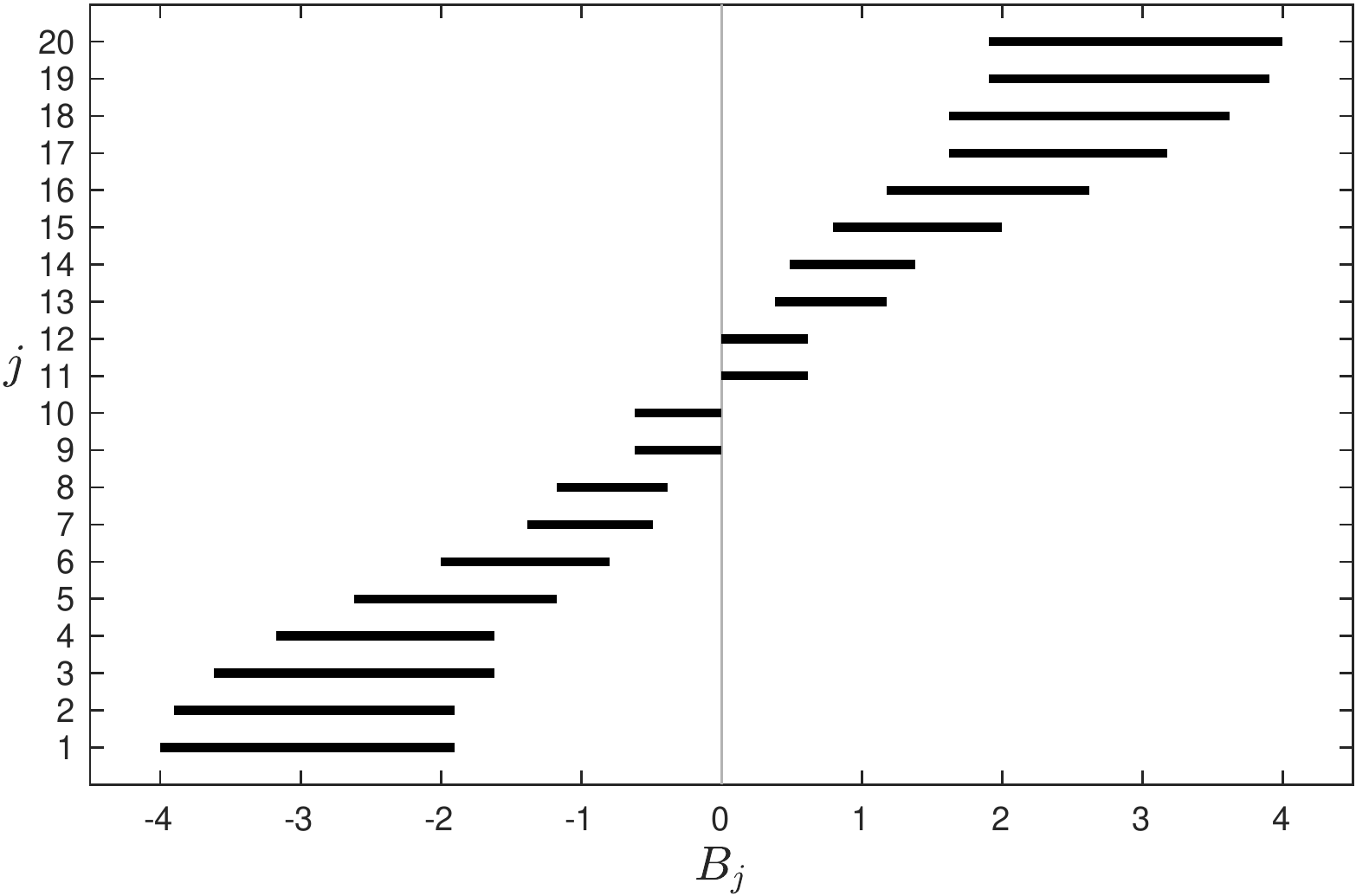}

\begin{picture}(0,0)
\put(-48,35){\footnotesize $\p=(5,4)$}
\put(130,35){\footnotesize $\p=(10,2)$}
\end{picture}

\vspace*{-20pt}
\caption{\label{fig:bands}
The twenty spectral bands $B_j$ whose union gives $\sigma(\Delta)=[-4,4]$
for $\p=(5,4)$ (left) and $\p=(10,2)$ (right).
If $p$ or $q$ is odd, each $E\in(-4,4)$ is in the interior of some $B_j$.
If $p$ and $q$ are even, $E=0$ is only only point in $(-4,4)$ that is not
in the interior of any $B_j$.}
\end{figure}

One key fact that we will use frequently is that $\Delta_{\theta,\varphi}^\Gamma$ has the structure of a \emph{separable} operator. Conceretely, under the natural identification $\Hi_\Gamma \cong \C^p \otimes \C^q$,
\[
\Delta_{\theta,\varphi}^\Gamma
\cong
\Delta_\theta^p \otimes\I_{q\times q} + \I_{p \times p} \otimes \Delta_\varphi^q.
\]
Thus, for every $1 \le j \le pq$, $\lambda_j^\Gamma(\theta,\varphi)$ is of the form $\lambda_{k_1}^p(\theta) + \lambda_{k_2}^q(\varphi)$ for some choice of $1\le k_1 \le p$ and $1 \le k_2 \le q$.

By standard eigenvalue perturbation theory for Hermitian matrices, the edges of the bands are 1-Lipschitz functions of the underlying potential (viewed as an element of $\R^\Gamma$ equipped with the uniform norm topology; compare \cite[Lemma~3.9]{KrugPreprint}). Heuristically, this means that if $\lambda$ is small and $V$ is $\p$-periodic, then $H = \Delta + \lambda V$ can be viewed as an infinitesimal perturbation to the operator $\Delta$ that preserves the number of bands but infinitesimally shifts their locations. The only place such a perturbation can open a gap is at the interface between bands, not in the interior of any band. Thus, to prove Theorem~\ref{t:discreteBSconj}, it suffices to show that every energy in $(-4,4)$ is in the interior of at least one spectral band of $\Delta$ (by compactness); see Figure~\ref{fig:bands}.    

Away from energy zero, it will be most convenient to treat periods $\p$ that are equal and even, so one may introduce $r = \mathrm{lcm}(p,q,2)$, and denote $\fr = (r,r)$ throughout this section. Obviously, any $\p$-periodic potential is also $\fr$-periodic, and hence it suffices to show that any $E \neq 0$ is in the interior of at least one $B_j^{\Gamma_\fr}$. When $\p$ is not even, we must show a similar result for $E=0$. In this case it will be technically convenient for at least one period to be divisible by four, so if $\p$ is not even, introduce
\[
\p'
=
(p',q')
=
\begin{cases}
(p,\mathrm{lcm}(q,4)), & \mbox{if $p$ is odd;}\\
(\mathrm{lcm}(p,4),q), & \mbox{otherwise.}
\end{cases}
\]
Of course, any $\p$-periodic potential is also $\p'$ periodic. Finally, we note that interchanging the roles of $p$ and $q$ leaves the bands invariant, and hence there is no generality lost in assuming that $p$ is odd.
\smallskip

In view of the foregoing discussion, our goal in this section is to prove the following theorem.

\begin{theorem} \label{t:nogaps}
Let $\Gamma = \Gamma_{(r,r)}$ with $r \in \Z_+$ even. For every $E \in (-4,4) \setminus \set{0}$, 
\[
E \in \Int\, B_j^\Gamma \text{ for some } 1 \le j \le r^2.
\]
If $\p = (p,q)$ with $p$ odd and $q$ divisible by four, then
\[
0 \in \Int \, B_j^{\Gamma_\p} \text{ for some } 1 \le j \le pq.
\]

\end{theorem}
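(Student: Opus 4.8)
The plan is to reduce the claim to a statement about the number of negative eigenvalues of the fibers and then to locate a single transversal zero-crossing. Write $N(\theta,\varphi)$ for the number of eigenvalues of $\Delta^\Gamma_{\theta,\varphi}$ that are strictly negative. Since $\B$ is connected and each $\lambda_j^\Gamma$ is continuous, each band $B_j^\Gamma$ is a compact interval, so it suffices to produce an index $j$ for which $\lambda_j^\Gamma$ takes a strictly positive and a strictly negative value. Concretely, if $A,B\in\B$ satisfy $0\notin\sigma(\Delta^\Gamma_A)\cup\sigma(\Delta^\Gamma_B)$ and $N(A)<N(B)$, then for $j=N(A)+1$ we have $\lambda_j^\Gamma(A)>0$ and $\lambda_j^\Gamma(B)<0$, whence $0\in\Int B_j^\Gamma$. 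I would first record the sublattice (chiral) symmetry $S\psi_{n,m}=(-1)^{n+m}\psi_{n,m}$: since $p$ is odd and $q$ is even, $S$ intertwines the fibers at $(\theta,\varphi)$ and $(\pi-\theta,\varphi)$ and anticommutes with $\Delta$, giving $\sigma(\Delta^\Gamma_{\theta,\varphi})=-\sigma(\Delta^\Gamma_{\pi-\theta,\varphi})$. Consequently $N(\pi-\theta,\varphi)=pq-N(\theta,\varphi)$ whenever $0\notin\sigma$, and it is enough to exhibit a single point with $0\notin\sigma$ and $N\neq pq/2$; its mirror image then furnishes the pair $A,B$.

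The symmetry also explains why $E=0$ is delicate: at $\theta=\pi/2$ the spectrum of every fiber is symmetric about $0$, so $N\equiv pq/2$ there, and $0$ is pinned as a common band edge along the line $\theta=\pi/2$. The point of the hypothesis that $p$ is odd is precisely that it breaks this protection. Using the separable structure $\Delta^\Gamma_{\theta,\varphi}\cong\Delta_\theta^p\otimes\I+\I\otimes\Delta_\varphi^q$, the fiber eigenvalues are the sums $\lambda_{k_1}^p(\theta)+\lambda_{k_2}^q(\varphi)$. By Proposition~\ref{p:1dlaplacianeigmults}, $p$ odd forces $0$ to be a simple eigenvalue of $\Delta_{\pi/2}^p$, while $q\equiv0\bmod4$ forces $0\in\sigma(\Delta_0^q)$ with multiplicity two; moreover $0$ is attained by the $q$-fibers only at the periodic corner $\varphi=0$, so $0\notin\sigma(\Delta_\varphi^q)$ for $\varphi\in(0,\pi)$. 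Thus the zero set $Z=\{(\theta,\varphi):0\in\sigma(\Delta^\Gamma_{\theta,\varphi})\}$ is a finite union of real-analytic arcs (graphs $\theta=\tau_{k_1,k_2}(\varphi)$, well defined because Lemma~\ref{lem:cornerderivatives}(b) gives $\lambda_{k_1}^p{}'\neq0$ on $(0,\pi)$), and two of these arcs — corresponding to the simple branch of $\Delta_\theta^p$ through $0$ and to the two branches of $\Delta_\varphi^q$ splitting off the double eigenvalue $0$ — pass through the corner $(\pi/2,0)$ and run into the open square $(0,\pi)^2$.

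I would then aim to select an interior point $(\theta_0,\varphi_0)\in(0,\pi)^2$ of $Z$ at which $0$ is a \emph{simple} eigenvalue of $\Delta^\Gamma_{\theta_0,\varphi_0}$, with $\theta_0\neq\pi/2$. At such a point the corresponding band $\lambda_j^\Gamma$ is real-analytic near $(\theta_0,\varphi_0)$ and equals $\lambda_{k_1}^p(\theta)+\lambda_{k_2}^q(\varphi_0)$, whose $\theta$-derivative $\lambda_{k_1}^p{}'(\theta_0)$ is nonzero by Lemma~\ref{lem:cornerderivatives}(b); hence the crossing is transversal, $\lambda_j^\Gamma$ takes both signs, and $0\in\Int B_j^\Gamma$. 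Equivalently, such a crossing changes $N$ by exactly one. Tracking the arc emanating from $(\pi/2,0)$ away from the corner, the eigenvalue $0$ has multiplicity one except at the finitely many points where this arc meets another arc of $Z$, so a generic interior point of the arc should be of the desired type.

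The main obstacle is to rule out the degenerate alternative, in which $0$ remains a high-multiplicity eigenvalue along an entire arc of $Z$ — i.e.\ two arcs coincide identically — so that no transversal simple crossing is available. Here I would invoke analyticity (distinct real-analytic arcs meet in a finite set unless they coincide) together with degenerate perturbation theory at the exceptional points. The key computation is that, since only $\Delta_\theta^p$ depends on $\theta$ and the one-dimensional spectra are simple on $(0,\pi)$, the first-order $\theta$-splitting of the eigenvalue $0$ at a point of $Z$ is governed by the \emph{diagonal} matrix with entries $\lambda_{k_1}^p{}'(\theta_0)$, one for each pair $(k_1,k_2)$ with $\lambda_{k_1}^p(\theta_0)+\lambda_{k_2}^q(\varphi_0)=0$; by Lemma~\ref{lem:cornerderivatives}(b) these slopes are nonzero with signs determined by the parity of $k_1$. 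At $\theta_0=\pi/2$ the chiral symmetry pairs these slopes into canceling $\pm$ pairs (the band-edge scenario), but for $\theta_0\neq\pi/2$ this pairing is destroyed. The crux is to show, using the parity forced by $p$ odd, that the splitting cannot be perfectly balanced along the whole arc, so that some fiber has $N\neq pq/2$ and, by the first paragraph, $0$ lies in the interior of a band. I expect verifying this last point — that the slope pattern is genuinely unbalanced away from $\theta=\pi/2$ — to be the technical heart of the argument.
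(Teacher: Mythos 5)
Your proposal engages only the second assertion of the theorem. The counting device is sound (if $0\notin\sigma(\Delta^\Gamma_A)\cup\sigma(\Delta^\Gamma_B)$ and $N(A)<N(B)$, then $0\in\Int B^\Gamma_{N(A)+1}$), and your chiral-symmetry identity $\sigma(\Delta^\Gamma_{\theta,\varphi})=-\sigma(\Delta^\Gamma_{\pi-\theta,\varphi})$ for $p$ odd and $q$ even checks out (the sublattice sign flip sends $\theta\mapsto\theta+\pi$, and complex conjugation folds this back to $\pi-\theta$), so the reduction to finding one point with $0\notin\sigma$ and $N\neq pq/2$ is a genuinely nice observation. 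But that symmetry is specific to the energy $E=0$, and nothing in your writeup produces eigenvalue-count discrepancies for $E\in(-4,4)\setminus\{0\}$ on the even $r\times r$ lattice, which is the bulk of the theorem. That part requires two separate arguments, as in the paper: for $E\notin\CC_\Gamma$ a parity obstruction via Lemma~\ref{lem:2deigmults} (the count at $(0,0)$ is odd because $-4$ is a simple eigenvalue, while the count at $(\pi,\pi)$ is even because all multiplicities there are divisible by four; Proposition~\ref{prop:nogap:nonexceptionalpts}), and for the exceptional energies $E\in\CC_\Gamma\setminus\{0,\pm 4\}$ a degenerate-perturbation analysis off the corners $(0,0)$ and $(\pi,\pi)$, including the sheared perturbation $(\theta,\varphi)=((1+\delta)\e,\e)$ that is needed when $E\in-2+\sigma(\Delta_0^r)$ (Proposition~\ref{prop:nogap:exceptionalexceptzero}). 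This half of the proof is simply absent from your proposal.

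Within the $E=0$ argument, the step you defer is not a finishing technicality but the actual heart, and your fallback heuristic --- that a generic interior point of the arc emanating from $(\pi/2,0)$ should carry a simple transversal crossing --- fails in general. The one-dimensional branches are explicit cosines $2\cos\bigl((\pm\theta+2\pi a)/p\bigr)$ and $2\cos\bigl((\pm\varphi+2\pi b)/q\bigr)$, so the arcs of your set $Z$ are straight segments cut out by congruences of the form $q(\pm\theta+2\pi a)\pm p(\pm\varphi+2\pi b)\equiv pq\pi \pmod{2\pi pq}$; when $\gcd(p,q)>1$, \emph{distinct} index pairs can satisfy the \emph{same} congruence, so the eigenvalue $0$ can have multiplicity at least two along an entire interior arc, and if the two slopes have opposite signs the count $N$ does not change across it. This persistent degeneracy is precisely why Kr\"uger's theorem needed $\gcd(p,q)=1$ and is the difficulty the paper must defeat. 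The paper's Proposition~\ref{prop:nogap:zero} resolves it not in the interior but at the corner $(\pi/2,0)$: since $p$ is odd, the slopes of $\lambda_j^p$ and $\lambda_{p+1-j}^p$ at $\pi/2$ have the same sign, so all degeneracies except the distinguished one from \eqref{eq:sum0} cross zero in quartets moving together as $\theta$ passes $\pi/2$, while the simple branch $\lambda_{(p+1)/2}^p$ crosses alone; hence the counts at $(\pi/2\pm\e,0)$ differ modulo four ($\ell_+\equiv 2$, $\ell_-\equiv 0 \bmod 4$ when $p\equiv 1\bmod 4$), which is exactly the imbalance you conjecture but do not prove. Until you establish such an imbalance at some concrete point, your argument for $E=0$ is open --- and note that once one performs this corner computation, your symmetry reduction, though elegant, is no longer needed.
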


We will prove this result in a sequence of steps. Away from a suitable exceptional set, a soft eigenvalue counting argument will show that $E$ must be in the interior of at least one band. The exceptional set corresponds to the energies that occur at the corners of the Brillouin zone, which themselves correspond to sums of eigenvalues of truncations of the corresponding one-dimensional Laplacian.  Let $\Gamma$ and $r$ be as in the statement of the theorem, put
\[
\CC_r
=
\set{2\cos\left( \frac{\pi j}{r} \right) : j \in \Z, \text{ and } 0 \le j \le r} 
= \sigma(\Delta_0^r) \cup \sigma(\Delta_\pi^r),
\]
and define the set of \emph{exceptional energies} to be 
\[
\CC_\Gamma
=
\CC_r + \CC_r
=
\set{a+b : a ,b \in \CC_r}.
\]

\begin{figure}[t!]
\hspace*{-20pt}
\includegraphics[scale=0.38]{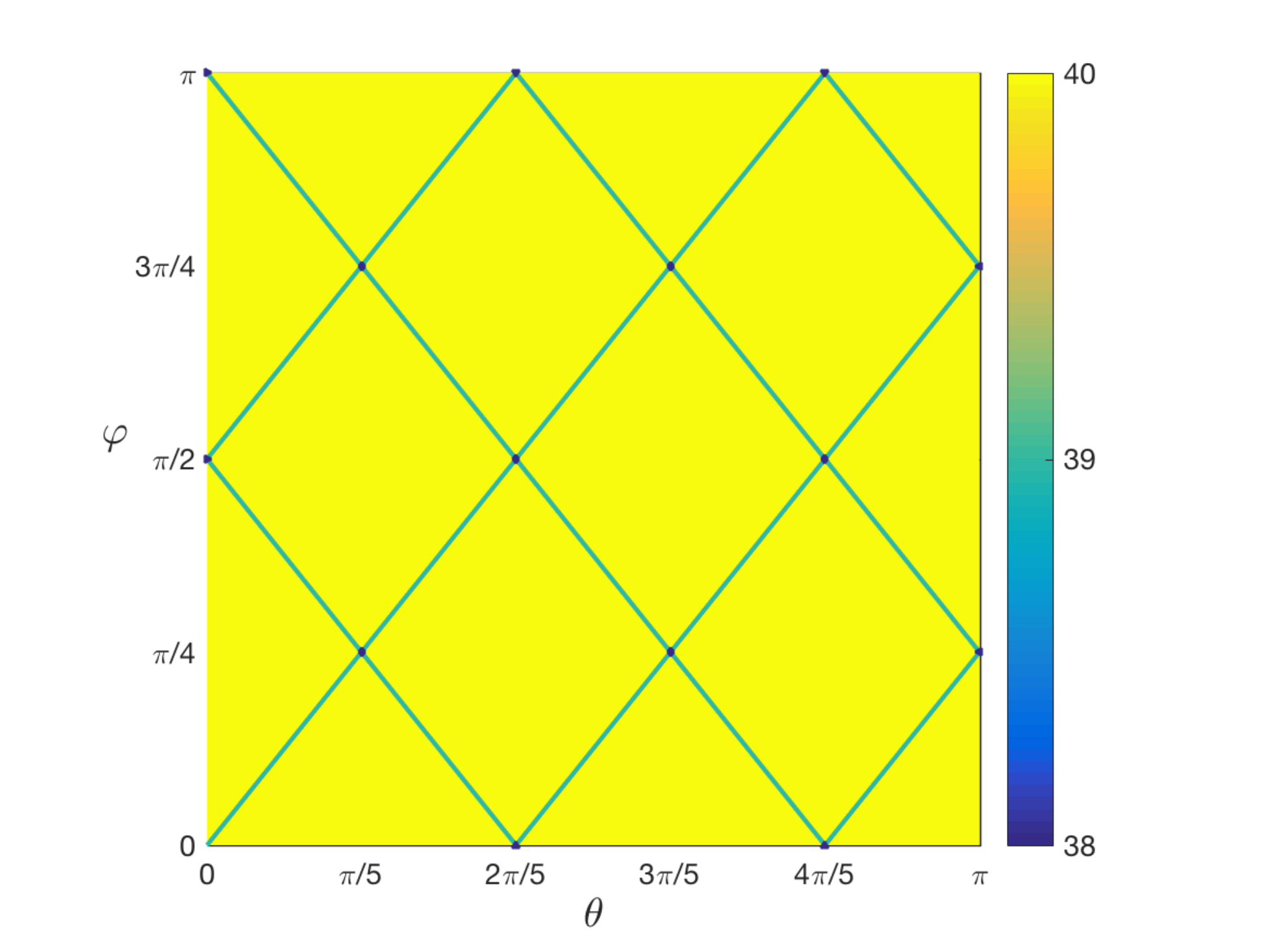}\hspace*{-25pt}
\includegraphics[scale=0.38]{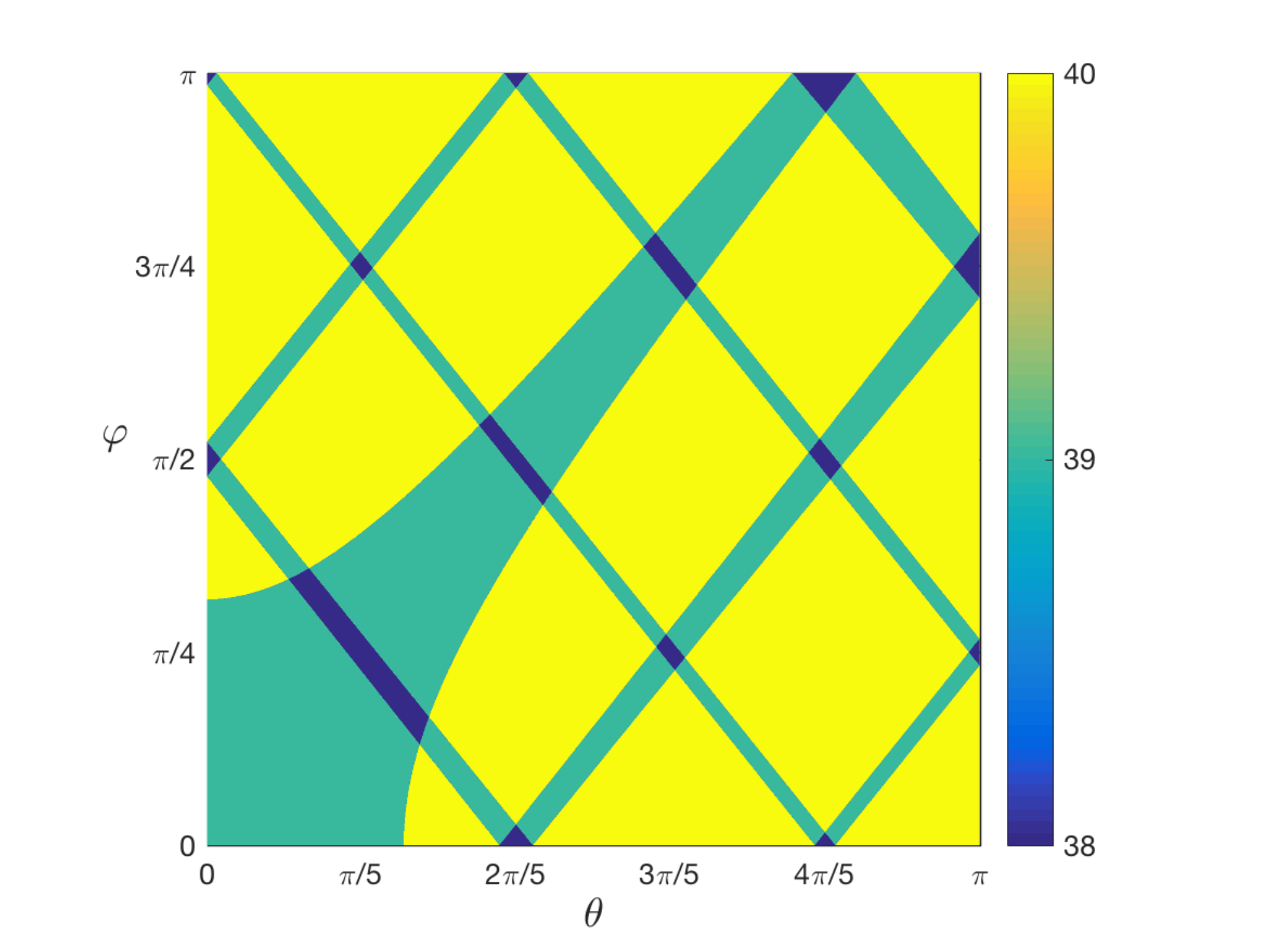}

\vspace*{-10pt}
\caption{\label{fig:quilt_even}
Number of eigenvalues of $\Delta^\Gamma_{\theta,\varphi}$ strictly below 
$E=0 \in \CC_\Gamma$ (left) and $E=-0.01\not\in\CC_\Gamma$ (right) for $\p=(8,10)$.}
\end{figure}

\begin{figure}[t!]
\hspace*{-20pt}
\includegraphics[scale=0.38]{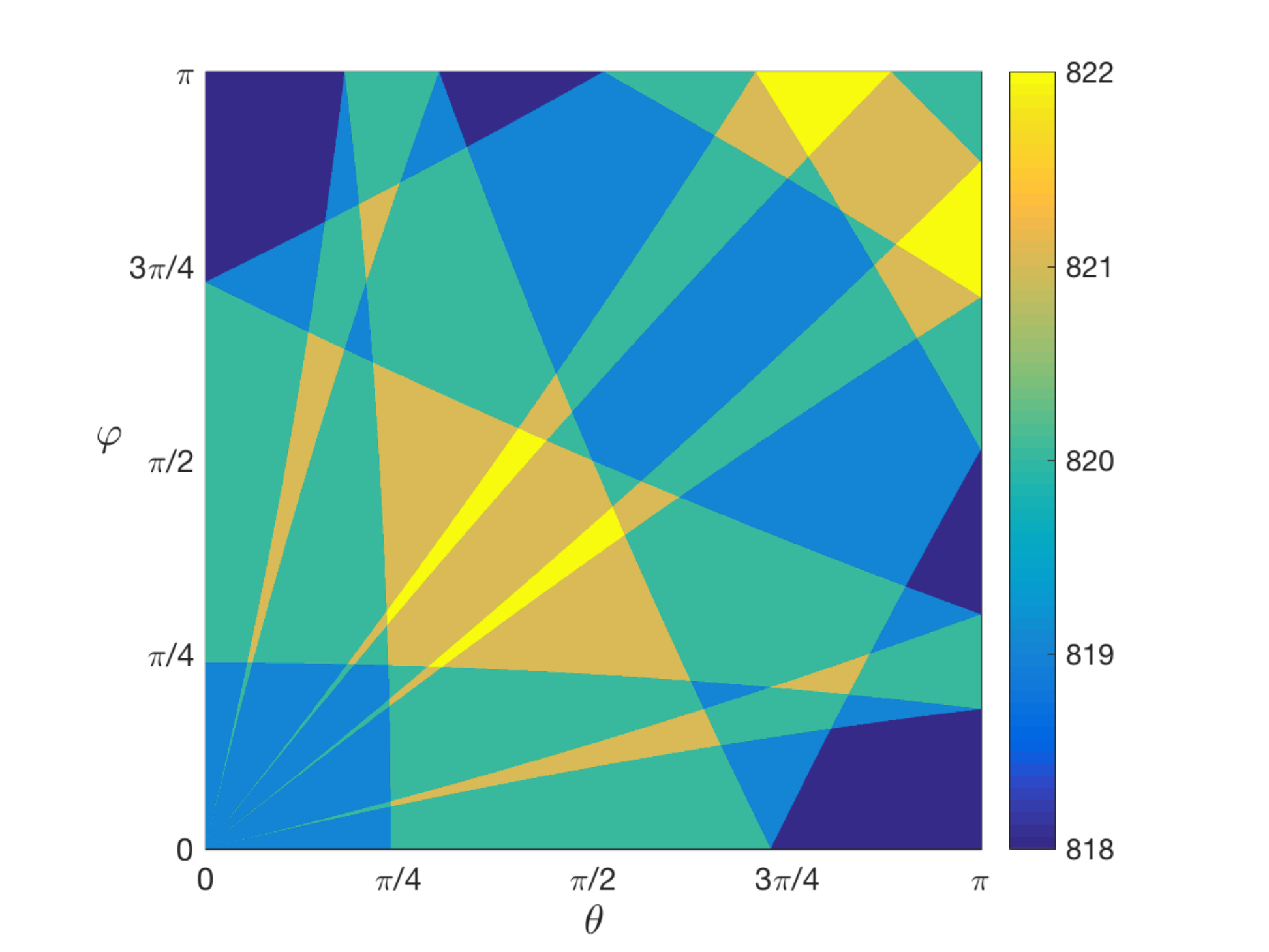}\hspace*{-25pt}
\includegraphics[scale=0.38]{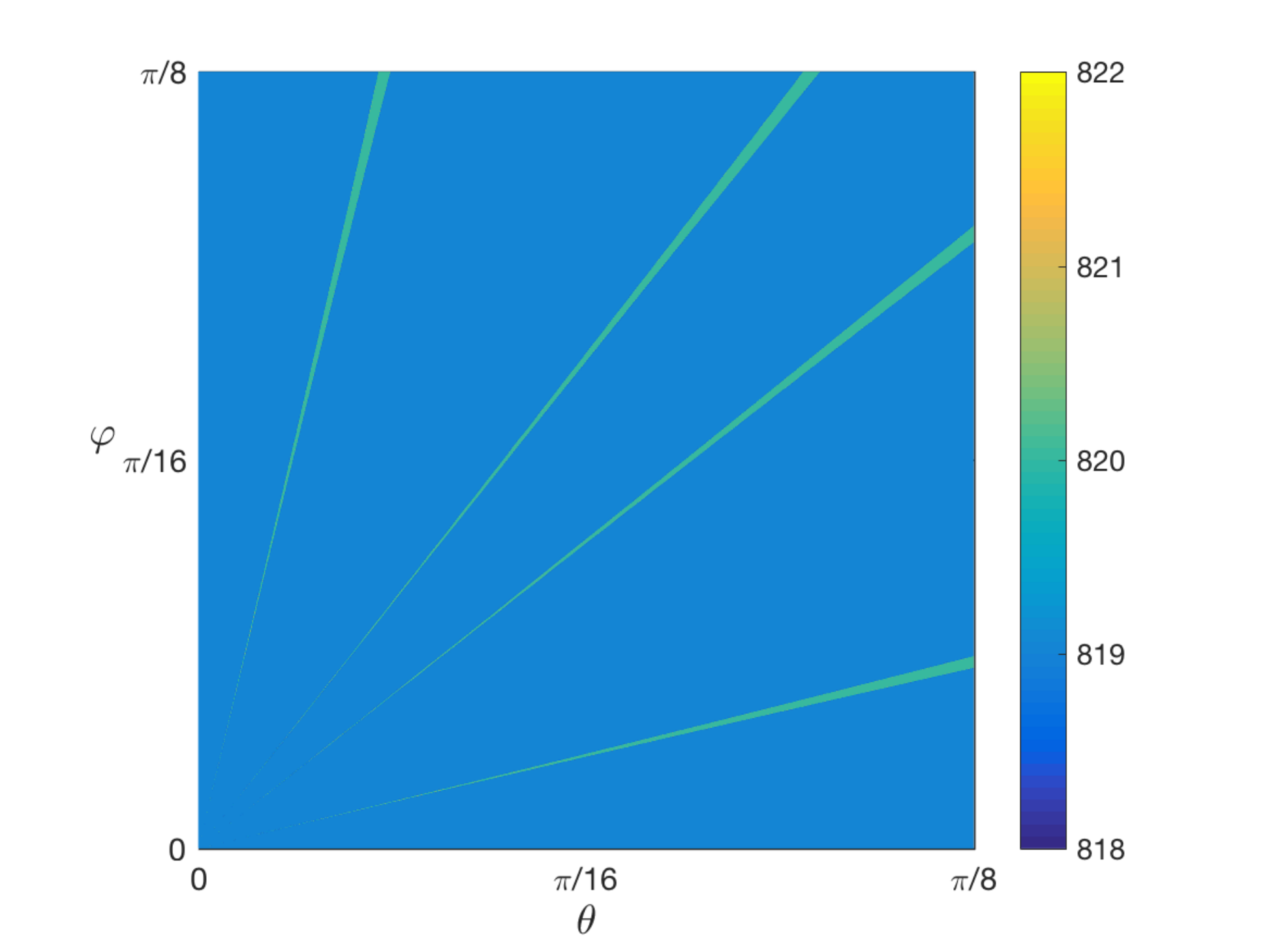}

\vspace*{-10pt}
\caption{\label{fig:quilt_nzE}
Number of eigenvalues of $\Delta^\Gamma_{\theta,\varphi}$ strictly below 
$E=2\cos(\pi/3)+2\cos(\pi/15)\in\CC_\Gamma$ for $\p=(30,30)$ (left plot), 
and a closer look at the lower-left corner (right plot).}
\end{figure}

The key idea is to construct pairs of points in the Brillouin zone with different ``eigenvalue counts.'' More precisely, for each $E$, we want to find $(\theta_1,\varphi_1), (\theta_2,\varphi_2) \in \B$ with $ E \notin \sigma(\Delta^\Gamma_{\theta_1,\varphi_1}) \cup \sigma(\Delta^\Gamma_{\theta_2,\varphi_2})$ and
\begin{equation} \label{eq:diffevcounts}
\#\left( \sigma(\Delta^\Gamma_{\theta_1,\varphi_1}) \cap (-\infty,E) \right)
\ \neq\ 
\# \left( \sigma(\Delta^\Gamma_{\theta_2,\varphi_2}) \cap (-\infty,E) \right).
\end{equation}
Figures~\ref{fig:quilt_even}--\ref{fig:quilt_nzE} suggest the overall strategy. First, the defintion of the exceptional energies is such that $\sigma(\Delta_{0,0}^\Gamma), \sigma(\Delta_{\pi,\pi}^\Gamma) \subseteq \CC_\Gamma$. The right plot in Figure~\ref{fig:quilt_even} illustrates that \eqref{eq:diffevcounts} holds for $(\theta_1,\varphi_1) = (0,0)$ and $(\theta_2,\varphi_2) = (\pi,\pi)$ for $E = -0.01 \in (-4,4)\setminus \CC_\Gamma$. Indeed, it is simple to show that the eigenvalue counts differ for $(0,0)$ and $(\pi,\pi)$ whenever $E\in(-4,4)\setminus \CC_\Gamma$: Lemma~\ref{lem:2deigmults} implies that the left-hand side of \eqref{eq:diffevcounts} is odd while the right-hand side is even!  (See Proposition~\ref{prop:nogap:nonexceptionalpts}.)

For nonzero exceptional energies, $E \in \CC_\Gamma \setminus \{0\}$, the question is more delicate, as $E$ may be an eigenvalue of $\Delta^\Gamma_{0,0}$ or $\Delta_{\pi,\pi}^\Gamma$ having high multiplicity.  Consider $\p=(30,30)$, shown in Figure~\ref{fig:quilt_nzE} for an $E\in \CC_\Gamma$.  While the values 
$(\theta_1,\varphi_1) = (0,0)$ and $(\theta_2,\varphi_2) = (\pi,\pi)$
fail to satisfy~\eqref{eq:diffevcounts}, one can
``push off'' of these corners to nearby $(\theta_j,\varphi_j)$ that satisfy~\eqref{eq:diffevcounts}. The key point is to watch how the eigenvalues of $\Delta_{\theta_j,\varphi_j}^\Gamma$ split, and to perturb $(\theta_j,\varphi_j)$ in a way that avoids the ``tendrils'' visible in the lower-left of the plots in Figure~\ref{fig:quilt_nzE}.

Finally, one has to deal with $E = 0$ when $p$ is odd and $q$ is divisible by four. In this case, we can perturb around the point $(\pi/2,0)$.

The key in making precise the empirical observations from the pictures is furnished by applying Lemma~\ref{lem:cornerderivatives} and a bit of degenerate eigenvalue perturbation theory. In the following subsection, we supply the details.

\subsection{Proof details}

Throughout this section, we fix notation as in the previous subsection: $\Gamma = \Gamma_{(r,r)}$ with $r$ even.

\begin{prop} \label{prop:nogap:nonexceptionalpts}
If $E \in (-4,4) \setminus \CC_\Gamma$, then $E \in \Int \, B_j^\Gamma$ for some $j$.
\end{prop}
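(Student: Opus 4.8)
The plan is to exhibit two specific points in the Brillouin zone at which the number of eigenvalues of $\Delta^\Gamma_{\theta,\varphi}$ lying strictly below $E$ have different parities, forcing some band to cross $E$ transversally. The two natural candidates are the corners $(\theta_1,\varphi_1)=(0,0)$ and $(\theta_2,\varphi_2)=(\pi,\pi)$. Since $E\notin\CC_\Gamma$ and $\sigma(\Delta^\Gamma_{0,0}),\sigma(\Delta^\Gamma_{\pi,\pi})\subseteq\CC_\Gamma$, we have $E\notin\sigma(\Delta^\Gamma_{0,0})\cup\sigma(\Delta^\Gamma_{\pi,\pi})$, so both eigenvalue counts are well defined and neither point has an eigenvalue sitting exactly at $E$.

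The key computation is a parity count. First I would invoke Lemma~\ref{lem:2deigmults}: for $\Gamma=\Gamma_{(r,r)}$ with $r$ even, every eigenvalue of $\Delta^\Gamma_{\pi,\pi}$ has multiplicity divisible by four (hence even), while every eigenvalue of $\Delta^\Gamma_{0,0}$ has even multiplicity \emph{except} the extreme eigenvalues $\pm 4$, which are simple. Consequently the number of eigenvalues of $\Delta^\Gamma_{\pi,\pi}$ strictly below $E$ is even. For $\Delta^\Gamma_{0,0}$, since $E\in(-4,4)$, the eigenvalue $-4$ lies strictly below $E$ while $+4$ does not; thus the count below $E$ is a sum of even multiplicities plus exactly one contribution from the simple eigenvalue $-4$, making it odd. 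Therefore
\[
\#\!\left(\sigma(\Delta^\Gamma_{0,0})\cap(-\infty,E)\right)
\ \text{is odd},
\qquad
\#\!\left(\sigma(\Delta^\Gamma_{\pi,\pi})\cap(-\infty,E)\right)
\ \text{is even},
\]
so \eqref{eq:diffevcounts} holds for this pair.

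To conclude, I would translate the differing counts into a band crossing. Let $N(\theta,\varphi)=\#(\sigma(\Delta^\Gamma_{\theta,\varphi})\cap(-\infty,E))$. The function $(\theta,\varphi)\mapsto\lambda_j^\Gamma(\theta,\varphi)$ is continuous on the connected set $\B=[0,\pi]^2$ for each $j$. Since $N(0,0)\neq N(\pi,\pi)$, there must be some index $j$ and some point at which the $j$th eigenvalue crosses $E$; more precisely, because the counts at the two corners differ, at least one band $B_j^\Gamma=\{\lambda_j^\Gamma(\theta,\varphi):(\theta,\varphi)\in\B\}$ must contain values both strictly below and strictly above $E$. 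As $B_j^\Gamma$ is the continuous image of the connected set $\B$, it is an interval, and an interval containing points on both sides of $E$ contains $E$ in its interior. This yields $E\in\Int\,B_j^\Gamma$ for some $j$, as desired.

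The argument is entirely soft; the only real content is the parity bookkeeping, and I expect the main point requiring care is the clean passage from ``the two corner counts differ'' to ``some single band straddles $E$.'' One must verify that differing total counts genuinely forces an individual band (not merely the aggregate spectrum) to cross $E$: this follows because the bands are ordered, $\lambda_1^\Gamma\le\cdots\le\lambda_{pq}^\Gamma$, so $N(\theta,\varphi)$ equals the largest index $j$ with $\lambda_j^\Gamma(\theta,\varphi)<E$; if this index differs between the two corners, then for any intermediate index the corresponding band takes a value below $E$ at one corner and at or above $E$ at the other, and continuity on the connected domain $\B$ finishes the job.
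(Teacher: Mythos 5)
Your proof is correct, and it shares the paper's central idea --- the parity count via Lemma~\ref{lem:2deigmults} at the corners $(0,0)$ and $(\pi,\pi)$, which is exactly how the paper establishes \eqref{eq:diffevcounts} here --- but your finishing step is genuinely different and in fact cleaner. The paper argues by contradiction: writing $\ell$ and $m$ for the two counts, it supposes $E$ avoids the interiors of $B_\ell^\Gamma, B_{\ell+1}^\Gamma, B_m^\Gamma, B_{m+1}^\Gamma$, invokes the band-touching property \eqref{eq:laplacebandstouch} (which rests on $\sigma(\Delta)=[-4,4]$ having no gaps) to force $E = \sup B_\ell^\Gamma = \sup B_m^\Gamma = \inf B_{\ell+1}^\Gamma = \inf B_{m+1}^\Gamma$, and derives an ordering contradiction from $\ell \neq m$; it also needs a preliminary special case, separately placing $E \in \bigl(-4, -4\cos(\pi/r)\bigr)$ inside $B_1^\Gamma$ so that the maximal index $m$ at $(\pi,\pi)$ exists. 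Your argument instead directly exhibits a straddling band: with $N(0,0) \neq N(\pi,\pi)$ and $E$ not an eigenvalue at either corner, any intermediate index $j$ satisfies $\lambda_j^\Gamma < E$ strictly at one corner and $\lambda_j^\Gamma > E$ strictly at the other, so $B_j^\Gamma$, being an interval (continuous image of the connected set $\B$), contains points on both sides of $E$ and hence $E$ in its interior. This buys you two things: you never need \eqref{eq:laplacebandstouch}, and the bottom-of-spectrum case is absorbed automatically (if $N(\pi,\pi)=0$, the index $j=1$ works, since $\lambda_1^\Gamma(0,0) = -4 < E$). One small point of hygiene: your closing phrase ``at or above $E$'' should be ``strictly above $E$'' for the interior conclusion, but this is harmless since you already observed at the outset that $E \notin \sigma(\Delta_{0,0}^\Gamma) \cup \sigma(\Delta_{\pi,\pi}^\Gamma)$ because both spectra lie in $\CC_\Gamma$.
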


\begin{proof}
Fix $E \in (-4,4) \setminus \CC_\Gamma$. By Proposition~\ref{p:1dlaplacianeigmults}, 
\[
\lambda_1^\Gamma(0,0) = -4,
\qquad
\lambda_1^\Gamma(\pi,\pi) = -4\cos\left(\frac{\pi}{r}\right),
\]
so $\Int\,B_1^\Gamma \supset (-4,-4\cos(\pi/r))$. In fact, it is not hard to see that 
\[
B_1^\Gamma 
= 
[-4,-4\cos(\pi/r)].
\]
Consequently, if
\[
E \in 
\left(-4, -4\cos \left( \frac{\pi}{r} \right)\right),
\]
then $E$ is in the interior of the bottom band, so there is nothing to show. Thus (since $E \notin \CC_\Gamma$), assume
\begin{equation} \label{eq:e:apriori:lb}
E
>
-4\cos \left( \frac{\pi}{r} \right).
\end{equation} 
Now, choose $\ell$ and $m$ maximal such that
\[
\lambda_\ell^\Gamma(0,0) < E,
\qquad
\lambda_m^\Gamma(\pi,\pi) < E.
\]
Notice that \eqref{eq:e:apriori:lb} ensures $\ell$ and $m$ exist. Since $E \notin \CC_\Gamma$, Proposition~\ref{p:1dlaplacianeigmults} gives 
\[
E< \lambda_{\ell+1}^\Gamma(0,0),
\qquad
E < \lambda_{m+1}^\Gamma(\pi,\pi).
\]
Furthermore, notice that $\ell \neq m$: Lemma~\ref{lem:2deigmults} implies that $\ell$ must be odd and $m$ must be even, and hence they are distinct.

It suffices to show that $E$ lies in the interior of $B_j^\Gamma$ for some $j \in \set{\ell,\ell+1,m,m+1}$. Suppose on the contrary that $E$ is not in the interior of any of these bands. As a consequence of \eqref{eq:laplacebandstouch}, it follows that
\begin{equation} \label{eq:absurd:bandposition}
E 
= 
\sup B_\ell^\Gamma 
= 
\sup B_m^\Gamma 
= 
\inf B_{\ell+1}^\Gamma 
= 
\inf B_{m+1}^\Gamma.
\end{equation}
 However, this cannot be. For instance, if $\ell < m$, then the positioning indicated by \eqref{eq:absurd:bandposition} implies that $B_m^\Gamma$ lies to the left of $B_{\ell+1}^\Gamma$, even though $\ell+1 \le m$, which is absurd! The case $m < \ell$ is similarly impossible. Since $\ell \neq m$, it follows that $E$ lies in the interior of a band, and we are done.
\end{proof}

\begin{prop} \label{prop:nogap:exceptionalexceptzero}
If $E \in \CC_\Gamma \setminus\{-4,0,4\}$, then $E \in \Int \, B_j^\Gamma$ for some $j$.
\end{prop}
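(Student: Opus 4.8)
The plan is to handle the nonzero exceptional energies $E \in \CC_\Gamma \setminus \{-4,0,4\}$ by refining the eigenvalue-counting argument of Proposition~\ref{prop:nogap:nonexceptionalpts} using a local perturbative analysis near the corners $(0,0)$ and $(\pi,\pi)$ of the Brillouin zone. The difficulty, as flagged in the proof strategy, is that the clean parity argument breaks down: $E$ may now be an eigenvalue of $\Delta_{0,0}^\Gamma$ or $\Delta_{\pi,\pi}^\Gamma$, possibly of high multiplicity, so $E \in \sigma(\Delta_{0,0}^\Gamma) \cup \sigma(\Delta_{\pi,\pi}^\Gamma) \subseteq \CC_\Gamma$ and the counts $\#(\sigma(\Delta^\Gamma_{\theta,\varphi}) \cap (-\infty,E))$ are not even well-defined at the corners themselves. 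The remedy is to perturb slightly away from the corner and track how the degenerate eigenvalue cluster at $E$ splits, arranging that on one side strictly more than half the split eigenvalues fall below $E$ and on the other side strictly fewer, which forces $E$ into the interior of some band.

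First I would fix $E \in \CC_\Gamma \setminus\{-4,0,4\}$ and use separability: since $\Delta_{\theta,\varphi}^\Gamma \cong \Delta_\theta^r \otimes \I + \I \otimes \Delta_\varphi^r$, every eigenvalue equal to $E$ at a corner arises as a sum $\lambda_{k_1}^r(\theta) + \lambda_{k_2}^r(\varphi)$ with each summand an eigenvalue of the one-dimensional Laplacian. I would then move along a well-chosen ray $(\theta,\varphi) = (t\alpha, t\beta)$ (or from $(\pi,\pi)$ via $(\pi - t\alpha, \pi - t\beta)$) and compute, to first order in $t$, the splitting of the eigenvalues in the cluster at $E$. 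By Lemma~\ref{lem:cornerderivatives}, each one-dimensional eigenvalue branch $\lambda_j^r$ has a known one-sided derivative at $\phi \in \{0,\pi\}$, with magnitude $\frac{1}{r}\sqrt{4 - \lambda_j^r(\phi)^2}$ and sign controlled by the parity $(-1)^{r-j}$; tensoring, the first-order motion of each cluster eigenvalue is the corresponding sum of these derivatives. The key computation is to count how many of these directional derivatives are negative versus positive: this determines how many eigenvalues drop below $E$ and how many rise above it as one pushes off the corner.

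The main step is to show that the two corners yield \emph{different} eigenvalue counts for $E$ just below it after pushing off. I would count $N_0(E)$, the number of eigenvalues of $\Delta_{0,0}^\Gamma$ strictly below $E$ plus the number of cluster eigenvalues that move strictly downward as one leaves $(0,0)$, and likewise $N_\pi(E)$ from $(\pi,\pi)$; the goal is $N_0(E) \neq N_\pi(E)$. Here the parity bookkeeping from Lemma~\ref{lem:2deigmults}—that all multiplicities at the corners are divisible by four except for the simple extremes $\pm 4$, with $0$ having multiplicity $\equiv 2 \pmod 4$—is what I expect to drive the argument, ensuring that the ``below-$E$'' counts at the two corners have opposite parity once the degenerate clusters are resolved. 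The genuinely delicate point is verifying that one can choose the perturbation direction $(\alpha,\beta)$ so that \emph{no} cluster eigenvalue stays pinned at $E$ to first order (avoiding the ``tendrils'' in Figure~\ref{fig:quilt_nzE}); this requires that the relevant sums of one-dimensional derivatives never vanish, which is where the explicit formula \eqref{eq:eigderivs} and the strict sign $(-1)^{r-j}\lambda_j'(\theta) < 0$ on $(0,\pi)$ must be combined carefully, perhaps choosing $\alpha/\beta$ irrational or generic to break any accidental cancellations.

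Once the differing counts are established for an open set of $E$-values approaching the exceptional energy from each side—equivalently, once \eqref{eq:diffevcounts} is secured for perturbed points $(\theta_1,\varphi_1)$ near $(0,0)$ and $(\theta_2,\varphi_2)$ near $(\pi,\pi)$ with $E \notin \sigma(\Delta^\Gamma_{\theta_i,\varphi_i})$—the same topological argument as in Proposition~\ref{prop:nogap:nonexceptionalpts} applies verbatim: if $E$ avoided the interior of every band containing it, the bands would have to be ordered absurdly, contradicting \eqref{eq:laplacebandstouch}. I expect the perturbation-theoretic splitting analysis at the degenerate corner eigenvalues to be the principal obstacle, both because the eigenspaces can be large and because one must certify that the linearized motion genuinely separates the cluster rather than leaving part of it tangent to the level set $\{= E\}$; the rest is a repackaging of the earlier parity-and-ordering reasoning.
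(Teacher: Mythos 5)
Your high-level plan coincides with the paper's actual proof: use Lemma~\ref{lem:2deigmults} to get eigenvalue counts of opposite parity at the two corners ($\ell$ odd at $(0,0)$, $m$ even at $(\pi,\pi)$), push off each corner so that the degenerate cluster at $E$ resolves while the count changes only by an \emph{even} number, and then rerun the band-ordering contradiction of Proposition~\ref{prop:nogap:nonexceptionalpts}. The paper's choices $(\theta_2,\varphi_2)=(\pi-\e,\pi)$ and $(\theta_1,\varphi_1)=((1+\delta)\e,\e)$ are particular instances of your rays. However, the step you explicitly defer --- certifying how the clusters split --- is the whole content of the proposition, and your sketch has two concrete gaps there. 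First, the condition you aim for (no cluster eigenvalue pinned at $E$ to first order) is necessary but not sufficient: you need each cluster to shed an even number of eigenvalues below $E$, since, e.g., a $3$--$5$ split of an octet would flip the parity of the count and destroy the $\ell+2n_1 \neq m+2n_2$ argument. Genericity of the direction alone does not deliver this, and Lemma~\ref{lem:2deigmults} (which you cite as the driver) only governs multiplicities \emph{at} the corners, not the splitting. What delivers it is Lemma~\ref{lem:cornerderivatives}(b)--(c): each doubly degenerate one-dimensional eigenvalue at $\phi\in\{0,\pi\}$ carries two branches whose one-sided derivatives have equal magnitude $d$ and opposite sign, so along any direction $(\alpha,\beta)$ with $\alpha,\beta>0$ avoiding finitely many slopes (namely $\alpha/\beta=1$, from clusters with $\lambda_{2a}(0)+\lambda_{2a}(0)=E$, and $\alpha/\beta \in \{d_a/d_b,\,d_b/d_a\}$ from mixed pairs) every quartet splits $2$--$2$ and every octet $4$--$4$. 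This is precisely what the paper verifies by hand in \eqref{eq:eigsplit01}--\eqref{eq:eigsplit12}; note also that the bad slopes are specific, possibly irrational, ratios of derivative magnitudes, so ``choose $\alpha/\beta$ irrational'' does not do the job --- you must avoid those particular values.

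Second, your first-order scheme tacitly assumes all relevant one-dimensional derivatives are nonzero, but by \eqref{eq:eigderivs} the simple extreme eigenvalues $\pm 2$ of $\Delta_0^r$ have \emph{vanishing} derivative at $0$. The case $E\in -2+\sigma(\Delta_0^r)$ is exactly the delicate one the paper isolates, and it is not merely a pinning issue but a parity threat: with a one-parameter perturbation $(\e,0)$, the sub-cluster $\{\lambda_1(\e)+\lambda_{2c}(0),\,\lambda_1(\e)+\lambda_{2c+1}(0),\,\lambda_{2c}(\e)+\lambda_1(0),\,\lambda_{2c+1}(\e)+\lambda_1(0)\}$ splits $1$--$3$ (the two sums carrying the flat branch both sit on the same side of $E$, since $\lambda_{2c}(0)=\lambda_{2c+1}(0)$), flipping the parity. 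In your ray framework the case is salvageable --- for $\alpha,\beta>0$ each such sum contains exactly one branch moving at first order, so the sub-cluster splits $2$--$2$ with the flat branch contributing only $O(t^2)$ --- but this must be argued explicitly; together with the $a=b$ clusters it also explains why the diagonal $\alpha=\beta$ fails (the ``tendrils'' of Figure~\ref{fig:quilt_nzE}) and why a genuinely two-parameter, off-diagonal perturbation is needed at $(0,0)$, whereas the one-parameter perturbation $(\pi-\e,\pi)$ suffices at $(\pi,\pi)$, where no $\pm 2$ branch occurs for even $r$. With these two verifications supplied, your argument closes and is essentially the paper's proof.
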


\begin{proof}
Let $E \in \CC_\Gamma \setminus \set{-4,0,4}$ be given. By symmetry, it suffices to work with $E < 0$. As in the proof of Proposition~\ref{prop:nogap:nonexceptionalpts}, we choose $\ell,m \in \Z_+$ maximal with the property that
\[
\lambda_\ell^\Gamma(0,0) < E,
\qquad
 \lambda_m^\Gamma(\pi,\pi)
<
E.
\]
We adopt the convention that $\lambda_0^\Gamma(\pi,\pi) = -\infty$ to deal with the point
\[
E = -4\cos(\pi/r)
=
\lambda_1^\Gamma(\pi,\pi),
\]
in which case we may take $m = 0$. Arguing as before, we know that $\ell$ is odd and $m$ is even. The challenge is that these exceptional energies correspond to highly degenerate eigenvalues of $\Delta_{0,0}^\Gamma$ and $\Delta_{\pi,\pi}^\Gamma$.  
Our goal is to nudge into an eigenvalue count discrepancy as seen in the proof of Proposition~\ref{prop:nogap:nonexceptionalpts} 
by designing suitable perturbations of $(0,0)$ and $(\pi,\pi)$ that move half of the degeneracies to the left and half to the right.  More precisely, we shall prove the following.
\medskip

\noindent \textbf{Claim.} There exist $(\theta_1,\varphi_1), (\theta_2,\varphi_2) \in \B$ and integers $n_1,n_2 \geq 0$ such that
\[
\lambda_{\ell+2n_1}^\Gamma(\theta_1,\varphi_1) 
< 
E  
< 
\lambda_{\ell+2n_1+1}^\Gamma(\theta_1,\varphi_1),\]
and
\[
\lambda_{m+2n_2}^\Gamma(\theta_2,\varphi_2) 
< 
E 
< 
\lambda_{m+2n_2+1}^\Gamma(\theta_2,\varphi_2).
\]

Before proving the claim, we note that it implies the conclusion of Proposition~\ref{prop:nogap:exceptionalexceptzero} by the same argument that proved Proposition~\ref{prop:nogap:nonexceptionalpts}. Namely, since $\ell$ is odd and $m$ is even, it follows that $\ell+2n_1 \neq m + 2n_2$. Then, if $E$ fails to be in the interior of one of $B_{\ell+2n_1}^\Gamma$, $B_{\ell+2n_1+1}^\Gamma$, $B_{m+2n_2}^\Gamma$, or $B_{m + 2n_2+1}^\Gamma$, we get self-contradictory band locations, just as in the proof of Proposition~\ref{prop:nogap:nonexceptionalpts}. 

\begin{proof}[Proof of Claim]

The idea is to produce $(\theta_1,\varphi_1)$ and $(\theta_2,\varphi_2)$ as perturbations of $(0,0)$ and $(\pi,\pi)$. We obtain $n_1$ and $n_2$ by keeping track of multiplicities carefully. Start with $(\pi,\pi)$, which is simpler. If $E \notin \sigma(\Delta_{\pi,\pi}^\Gamma)$, there is nothing to do: take $n_2 = 0$ and $\theta_2 = \varphi_2 = \pi$. Now, suppose $E \in \sigma(\Delta_{\pi,\pi}^\Gamma)$. As mentioned before, all eigenvalues of $\Delta_{\pi,\pi}^\Gamma$ have multiplicity four by Lemma~\ref{lem:2deigmults}. Thus,
\[
\lambda_m^\Gamma(\pi,\pi) 
<
E
= 
\lambda_{m+1}^\Gamma(\pi,\pi)
=
\cdots
=
\lambda_{m+4s}^\Gamma(\pi,\pi)
<
\lambda_{m+4s+1}^\Gamma(\pi,\pi)
\]
for some integer $s \geq 1$. In fact, we can be more explicit: each quartet of $E$'s arises from a pair of doubly degenerate eigenvalues of $\Delta_\pi^r$, say $\lambda_{2a-1}^r(\pi) = \lambda_{2a}^r(\pi)$ and $\lambda^r_{2b-1}(\pi) = \lambda_{2b}^r(\pi)$ for some integers $a,b$ with
\[
E
=
\lambda_{2a-1}^r(\pi) + \lambda_{2b-1}^r(\pi).
\]
Then, since $r$ is even, Lemma~\ref{lem:cornerderivatives}(b) implies that $\lambda_{2a-1}$ is strictly increasing and $\lambda_{2a}$ is strictly decreasing on $(0,\pi)$. Consequently, we get
\begin{align*}
\lambda_{2a-1}^r(\pi-\e) + \lambda_{2b-1}^r(\pi)
& < E \\
\lambda_{2a-1}^r(\pi-\e) + \lambda_{2b}^r(\pi)
& < E \\
\lambda_{2a}^r(\pi-\e) + \lambda_{2b-1}^r(\pi)
& > E \\
\lambda_{2a}^r(\pi-\e) + \lambda_{2b}^r(\pi)
& > E
\end{align*}
for all $\e > 0$.
Applying this analysis to each pair of degenerate eigenvalues of $\Delta_\pi^r$ that sum to $E$, we see that we may take $n_2 = s$ and $(\theta_2,\varphi_2) = (\pi-\e,\pi)$ for some small $\e > 0$.
\medskip

We now turn to constructing $(\theta_1,\varphi_1)$ and $n_1$. As before, if $E \notin \sigma(\Delta_{0,0}^\Gamma)$, there is nothing to do, so assume $E \in \sigma(\Delta_{0,0}^\Gamma)$.  Moreover, if $E \notin -2 + \sigma(\Delta_0^r)$, then the argument from the $(\pi,\pi)$ case just studied carries over verbatim, so we may take $(\theta_1,\varphi_1) = (\e,0)$ for small $\e > 0$. 
(Figure~\ref{fig:quilt_nzE} shows an example.)
It remains to deal with the case when $E \in -2+\sigma(\Delta_0^r)$, so let such an $E$ with $E\in(-4,0)$ be given. Again, the multiplicity of $E$ as an eigenvalue of $\Delta_{0,0}^\Gamma$ is $4t$ for some $t \in \Z_+$ by Lemma~\ref{lem:2deigmults}.

This case is the most delicate because one has to find a perturbative argument that simultaneously works for three types of pairs of eigenvalues of $\Delta_{0}^r$ that sum to $E$. It turns out that we can take $(\theta_1,\varphi_1) = ((1+\delta)\e,\e)$ for $\delta,\e > 0$ sufficiently small. Let us describe how this comes about.

First, notice that $E \in -2 + \sigma(\Delta_0^r)$ implies that there exists an integer $c$ with $E = \lambda_1(0) + \lambda_{2c}(0)$. Fix $\delta > 0$ small; we will describe how small $\delta$ must be presently. In view of Proposition~\ref{p:1dlaplacianeigmults} and Lemma~\ref{lem:cornerderivatives}, we deduce
\begin{align}
\label{eq:eigsplit01}
\lambda_1(\theta) + \lambda_{2c}(\varphi) & < E \\
\label{eq:eigsplit02}
\lambda_1(\theta) + \lambda_{2c+1}(\varphi) & > E \\
\label{eq:eigsplit03}
\lambda_{2c}(\theta) + \lambda_{1}(\varphi) & < E \\
\label{eq:eigsplit04}
\lambda_{2c+1}(\theta) + \lambda_{1}(\varphi) & > E 
\end{align}
if $(\theta,\varphi) = ((1+\delta)\e,\e)$ for small $\e > 0$. 
We must address the possibility that this $E \in -2 + \sigma(\Delta_0^r)$ can also be expressed as the sum of pairs of doubly degenerate eigenvalues, i.e., that there exist positive integers $a$ and $b$ for which
\[
\lambda_{2a}(0) + \lambda_{2b}(0) = E.
\]
If $a \neq b$, then, since $E \neq 0$, we may assume without loss that $|\lambda_{2a}(0)| < |\lambda_{2b}(0)|$.
Lemma~\ref{lem:cornerderivatives}(c) then implies that
\[ |\lambda_{2b}'(0)| = |\lambda_{2b+1}'(0)| < |\lambda_{2a}'(0)| = |\lambda_{2a+1}'(0)|.\] 
Now take some $\delta$ such that
\[ 0 < \delta <  \frac{|\lambda_{2a}'(0)|}{|\lambda_{2b}'(0)|} - 1.\]
Again take $(\theta,\varphi) = ((1+\delta)\e,\e)$ with $\e > 0$ sufficiently small.
Then Lemma~\ref{lem:cornerderivatives}(b) gives
\begin{align}
\label{eq:eigsplit05}
\lambda_{2a}(\theta) + \lambda_{2b}(\varphi)
& < 
E \\
\label{eq:eigsplit06}
\lambda_{2a+1}(\theta) + \lambda_{2b}(\varphi)
& >
E \\
\label{eq:eigsplit07}
\lambda_{2a}(\theta) + \lambda_{2b+1}(\varphi)
& < 
E \\
\label{eq:eigsplit08}
\lambda_{2a+1}(\theta) + \lambda_{2b+1}(\varphi)
& > 
E \\
\label{eq:eigsplit09}
\lambda_{2b}(\theta) + \lambda_{2a}(\varphi)
& < 
E \\
\label{eq:eigsplit10}
\lambda_{2b+1}(\theta) + \lambda_{2a}(\varphi)
& <
E \\
\label{eq:eigsplit11}
\lambda_{2b}(\theta) + \lambda_{2a+1}(\varphi)
& > 
E \\
\label{eq:eigsplit12}
\lambda_{2b+1}(\theta) + \lambda_{2a+1}(\varphi)
& > 
E.
\end{align}
Notice that this particular choice of $\delta$ is required to ensure the inequalities \eqref{eq:eigsplit10} and \eqref{eq:eigsplit11} hold.
Thus, we ultimately take $\delta > 0$ with
\[
\delta
<
\min \left( \frac{|\lambda_{2a}'(0)|}{|\lambda_{2b}'(0)|} - 1 \right),
\]
where the minimum ranges over all pairs with $\lambda_{2a}(0) + \lambda_{2b}(0) = E$ and $|\lambda_{2a}(0)| < |\lambda_{2b}(0)|$.  On the other hand, when $a=b$, then \eqref{eq:eigsplit05}--\eqref{eq:eigsplit08} still hold for $(\theta,\varphi) = ((1+\delta)\e,\e)$, as long as $\e$ is sufficiently small. (Notice that one only considers four inequalities, since ``interchanging the roles of $a$ and $b$'' would be redundant in this scenario.)

Thus, we take $(\theta_1,\varphi_1) = ((1+\delta)\e,\e)$ for $\e>0$ sufficiently small and $n_1 = t$.  We have established the claim.
\end{proof}
With the claim proved, the proof of Proposition~\ref{prop:nogap:exceptionalexceptzero} is complete.
\end{proof}

\begin{figure}[t!]
\hspace*{-20pt}
\includegraphics[scale=0.38]{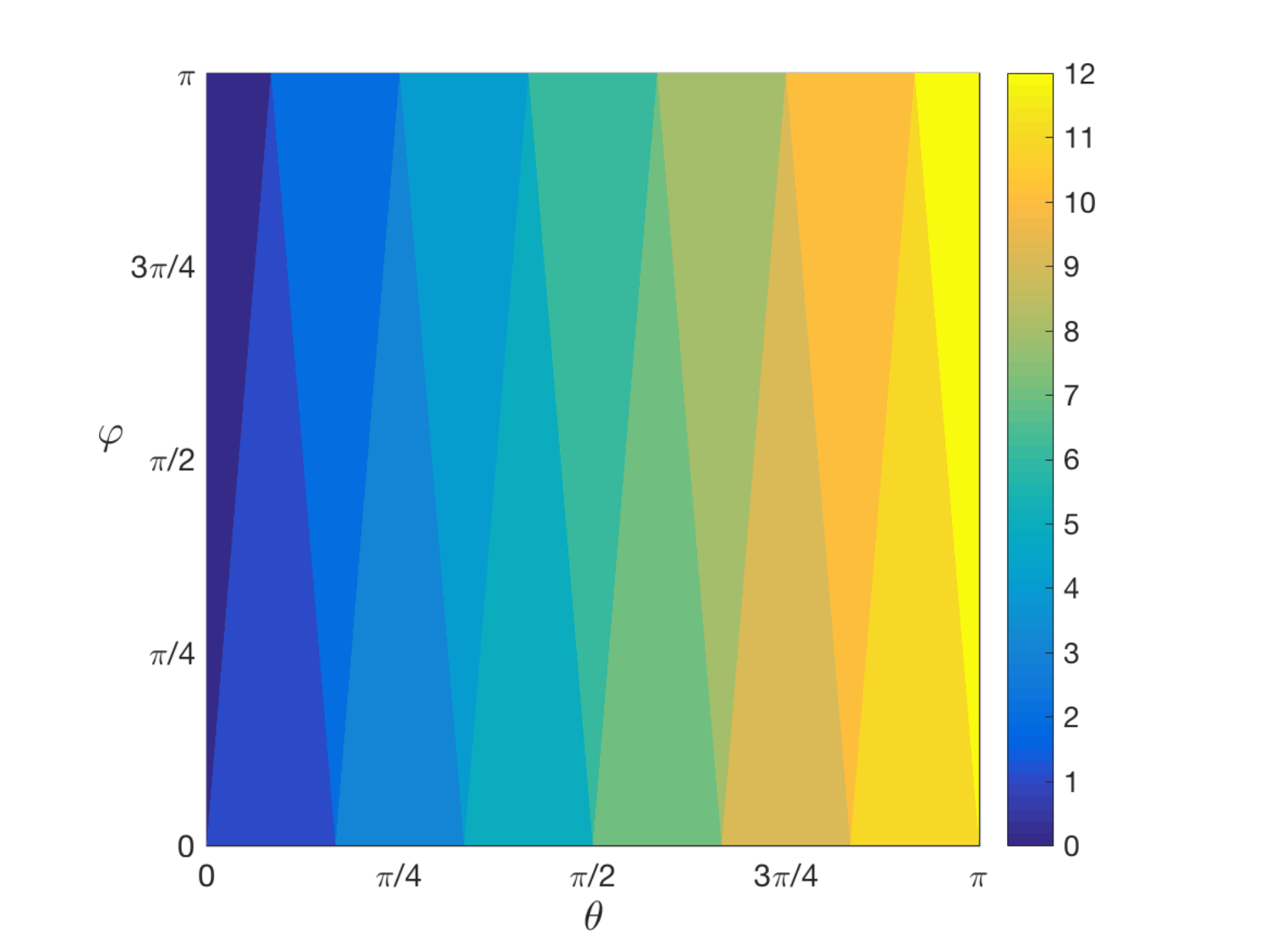}\hspace*{-25pt}
\includegraphics[scale=0.38]{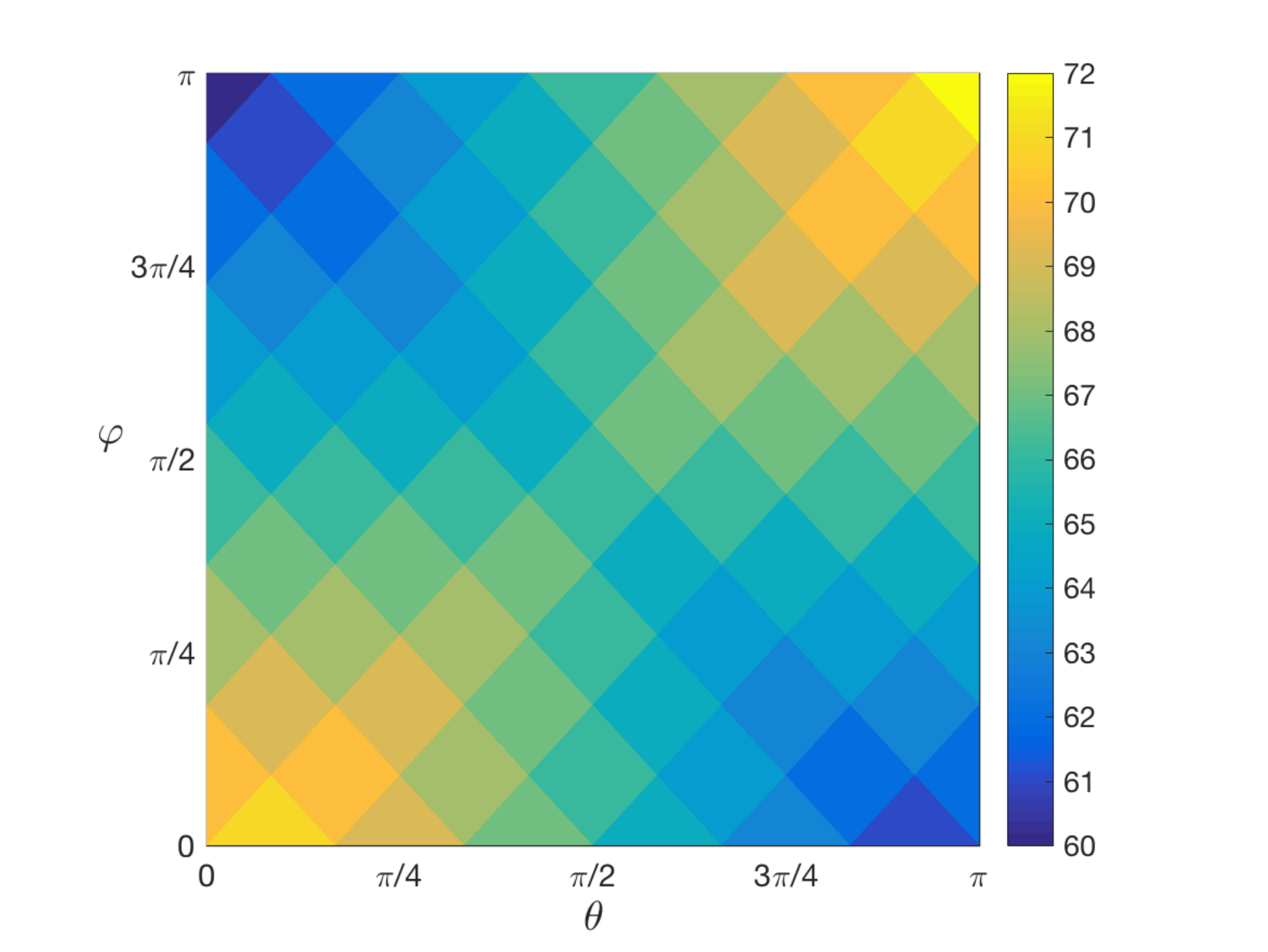}

\vspace*{-10pt}
\caption{\label{fig:quilt_odd0}
Number of eigenvalues of $\Delta^\Gamma_{\theta,\varphi}$ strictly below $E=0$ 
for $\p=(1,12)$ (left plot) and $\p=(11,12)$ (right plot).}
\end{figure}

Finally, we show that $E=0$ is in the interior of a band, provided some period is odd.
The examples in Figure~\ref{fig:quilt_odd0} illustrate this scenario.

\begin{prop} \label{prop:nogap:zero}
If $p$ is odd and $q$ is divisible by four, then $0 \in \Int\,B_j^{\Gamma}$ for some $j$, where $\Gamma = \Gamma_\p$.
\end{prop}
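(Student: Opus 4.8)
The plan is to perturb around the point $(\pi/2,0)$ and to exploit two structural facts: the separability of $\Delta_{\theta,\varphi}^\Gamma$ and a chiral symmetry special to the hypotheses $p$ odd, $q\equiv 0\pmod 4$. Recall that the eigenvalues of $\Delta_{\theta,\varphi}^\Gamma$ are exactly the sums $\lambda_i^p(\theta)+\lambda_k^q(\varphi)$ of eigenvalues of $\Delta_\theta^p$ and $\Delta_\varphi^q$. First I would record the chiral symmetry: conjugating $\Delta_\theta^p$ by the diagonal unitary $\mathrm{diag}((-1)^k)$ shows $\sigma(\Delta_\theta^p)=-\sigma(\Delta_{\pi-\theta}^p)$ when $p$ is odd, while the same device gives $\sigma(\Delta_\varphi^q)=-\sigma(\Delta_\varphi^q)$ when $q$ is even (equivalently, both descend from the involution $\psi_{n,m}\mapsto(-1)^{n+m}\psi_{n,m}$ on $\Z^2$, which anticommutes with $\Delta$). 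Taking the Minkowski sum of spectra yields $\sigma(\Delta_{\theta,\varphi}^\Gamma)=-\sigma(\Delta_{\pi-\theta,\varphi}^\Gamma)$.

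Next I would fix small $s,t>0$ and compare the two points $(\pi/2+s,t)$ and $(\pi/2-s,t)=(\pi-(\pi/2+s),t)$. Writing $N_\pm$ for the number of eigenvalues of $\Delta_{\pi/2\pm s,t}^\Gamma$ lying strictly below $0$, the symmetry gives $\sigma(\Delta_{\pi/2-s,t}^\Gamma)=-\sigma(\Delta_{\pi/2+s,t}^\Gamma)$, so once $0$ belongs to neither spectrum we have $N_++N_-=pq$. Thus it suffices to arrange $N_+\neq pq/2$; then $N_+\neq N_-$, and the band-ordering argument of Proposition~\ref{prop:nogap:nonexceptionalpts} (with $\ell=N_+$, $m=N_-$, and $E=0$) forces $0\in\Int\,B_j^\Gamma$ for some $j$.

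The heart of the matter is to compute $N_+$ by tracking the splitting of the zero eigenvalue. At the base point $(\pi/2,0)$ the spectrum is symmetric about $0$, so $N_0=(pq-M)/2$, where $M$ is the multiplicity of $0$. Since $p$ is odd, $0$ is a \emph{simple} eigenvalue of $\Delta_{\pi/2}^p$ (Proposition~\ref{p:1dlaplacianeigmults}); moreover every value $\lambda_i^p(\pi/2)$ with $-\lambda_i^p(\pi/2)\in\sigma(\Delta_0^q)$ is matched to an eigenvalue of $\Delta_0^q$ of multiplicity exactly two, because it cannot equal $\pm 2$ (as $\pm 2\notin\sigma(\Delta_{\pi/2}^p)$). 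Letting $P$ denote the number of such indices $i$, we obtain $M=2P$ and $N_0=pq/2-P$. Now I would perturb with $t$ small relative to $s$: for each such zero-sum pair, $\lambda_i^p(\theta)$ moves smoothly with nonzero derivative at $\pi/2$ (Lemma~\ref{lem:cornerderivatives}(b)), while the two branches of the matching double eigenvalue of $\Delta_\varphi^q$ move only by $O(t)$; since the two sums are $(\lambda_i^p)'(\pi/2)\,s\pm\tfrac1q\sqrt{4-\lambda_i^p(\pi/2)^2}\,t+\text{h.o.t.}$, choosing $t$ small enough (finitely many pairs) makes both sums acquire the sign of $(\lambda_i^p)'(\pi/2)\,s\neq 0$. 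Hence the two members of each pair land on the \emph{same} side of $0$, giving $N_+=N_0+2K$ where $K$ counts the pairs that drop below $0$.

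Finally I would invoke the parity of $P$. The set of participating values $\{\lambda_i^p(\pi/2):-\lambda_i^p(\pi/2)\in\sigma(\Delta_0^q)\}$ is invariant under $x\mapsto -x$, since both $\sigma(\Delta_{\pi/2}^p)$ and $\sigma(\Delta_0^q)$ are symmetric about $0$; its unique fixed point is $0$, so the nonzero participating values pair up and $P$ is \emph{odd}. Therefore $2K\neq P$, whence $N_+=pq/2-P+2K\neq pq/2$, which completes the argument. I expect the main obstacle to be precisely the possibility of ``accidental'' coincidences $\lambda_i^p(\pi/2)=-\lambda_k^q(0)\neq 0$ (the analogue of the delicate case in Proposition~\ref{prop:nogap:exceptionalexceptzero}); the payoff of this approach is that the symmetric pairing forces $P$ odd regardless of how many coincidences occur, and the choice $t\ll s$ makes each degenerate quartet move coherently, so no comparison of slopes \emph{between} the two tensor factors (the $\delta$-bookkeeping of Proposition~\ref{prop:nogap:exceptionalexceptzero}) is needed here.
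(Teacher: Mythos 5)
Your proof is correct, and while it shares the paper's skeleton---perturbing off the base point $(\pi/2,0)$, computing the multiplicity of zero as an eigenvalue of $\Delta^{\Gamma}_{\pi/2,0}$, and closing with the band-ordering contradiction of Proposition~\ref{prop:nogap:nonexceptionalpts}---the mechanism by which you extract the eigenvalue-count discrepancy is genuinely different. The paper keeps $\varphi=0$ fixed, compares $(\pi/2-\e,0)$ with $(\pi/2+\e,0)$, and tracks directions of motion explicitly: since $p$ is odd, $\lambda_j^p$ and $\lambda_{p+1-j}^p$ have same-sign derivatives at $\pi/2$ by Lemma~\ref{lem:cornerderivatives}, so each quartet crosses zero coherently while the doublet attached to the simple eigenvalue $\lambda_{(p+1)/2}^p(\pi/2)=0$ also crosses, yielding $\ell_-\equiv 0$ and $\ell_+\equiv 2 \pmod 4$ after a case split on $p \bmod 4$. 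You instead invoke the chiral symmetry $\sigma(\Delta^{\Gamma}_{\theta,\varphi})=-\sigma(\Delta^{\Gamma}_{\pi-\theta,\varphi})$, which is indeed valid with multiplicity (conjugating by $\mathrm{diag}((-1)^k)\otimes\mathrm{diag}((-1)^\ell)$ sends $-\Delta^{\Gamma}_{\theta,\varphi}$ to $\Delta^{\Gamma}_{\theta-\pi,\varphi}$ when $p$ is odd and $q$ is even, and $\Delta_{\theta-\pi}^p$, $\Delta_{\pi-\theta}^p$ are complex conjugates); this gives $N_++N_-=pq$ for free and reduces everything to $N_+\neq pq/2$, which in turn follows from the single parity fact that $P$, the number of coincidences $-\lambda_i^p(\pi/2)\in\sigma(\Delta_0^q)$, is odd---forced by the $x\mapsto -x$ symmetry of both one-dimensional spectra with unique fixed point $0$ (this is where $q\equiv 0\bmod 4$ enters, guaranteeing $0\in\sigma(\Delta_0^q)$, and your observation that $\pm2\notin\sigma(\Delta_{\pi/2}^p)$ correctly pins each matched $q$-eigenvalue at multiplicity exactly two). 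What your route buys: no case analysis on $p\bmod 4$ and no need to know in which direction any quartet moves, only that each pair moves coherently; the cost is verifying the symmetry. Two small remarks: your $\varphi$-perturbation $t>0$ is harmless but unnecessary---taking $t=0$, as the paper does, the degenerate $q$-pairs never split and each pair automatically moves coherently with $\lambda_i^p(\pi/2\pm s)$, so the $t\ll s$ bookkeeping can be dropped entirely; and for the identity $N_++N_-=pq$ you should state explicitly (your splitting computation already establishes it) that $0\notin\sigma(\Delta^{\Gamma}_{\pi/2\pm s,t})$ for the chosen parameters, all non-participating sums being bounded away from zero for $s,t$ small.
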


\begin{proof}
Consider the point $(\theta,\varphi) = (\pi/2,0) \in \B$, and observe that
\begin{equation} \label{eq:sum0}
\lambda_{\frac{p+1}{2}}^p(\pi/2) 
= 
\lambda_{\frac{q}{2}}^q(0)
=
\lambda_{\frac{q}{2}+1}^q(0) = 0,
\end{equation}
by Proposition~\ref{p:1dlaplacianeigmults}. In particular, \eqref{eq:sum0} implies that  $0 \in \sigma(\Delta_{\pi/2,0}^{\Gamma})$. 
Proposition~\ref{p:1dlaplacianeigmults} also shows that all the eigenvalues of $\Delta_{\pi/2}^p$ are simple and the spectrum is symmetric about zero:
\[
\lambda_j^p(\pi/2)
=
\lambda_{p+1-j}^p(\pi/2)
\text{ for every } 1 \le j \le p.
\]

To show that zero is in the interior of a band, we will compute the multiplicity of $0$ as an eigenvalue of $\Delta_{\pi/2,0}^{\Gamma}$, and then use Lemma~\ref{lem:cornerderivatives} to understand how this eigenvalue splits as we perturb around $\theta = \pi/2$ (leaving $\varphi = 0$ fixed).
\medskip

Now, let us make some observations. First, since the spectrum of $\Delta_0^q$ is also symmetric about zero (see Proposition~\ref{p:1dlaplacianeigmults}), if $\lambda_j^p(\pi/2) \notin\sigma(\Delta_0^q)$, then there is no $\mu \in \sigma(\Delta_0^q)$ with
\[
\lambda_j^p(\pi/2) + \mu = 0.
\]
On the other hand, for any $j$ such that $\lambda_j^p(\pi/2) \in \sigma(\Delta_0^q)$, there exists $k$ such that
\begin{equation} \label{eq:nogapzero1}
\lambda_j^p(\pi/2) + \lambda_{2k}^q(0)
=
\lambda_j^p(\pi/2) + \lambda_{2k+1}^q(0)
=
0.
\end{equation}
By symmetry,
\begin{equation} \label{eq:nogapzero2}
\lambda_{p+1-j}^p(\pi/2) + \lambda_{q-2k}^q(0)
=
\lambda_{p+1-j}^p(\pi/2) + \lambda_{q+1-2k}^q(0)
=
0.
\end{equation}
Of course, the pairs in \eqref{eq:nogapzero1} and \eqref{eq:nogapzero2} are distinct whenever $j \neq \frac{p+1}{2}$. Finally, we have from~(\ref{eq:sum0}) that $E=0$ can
emerge as an eigenvalue in one additional way:
\[
\lambda_{\frac{p+1}{2}}^p(\pi/2) + \lambda_{\frac{q}{2}}^q(0) =
\lambda_{\frac{p+1}{2}}^p(\pi/2) + \lambda_{\frac{q}{2}+1}^q(0)
=
0.
\]
Taken together, these observations imply that the multiplicity of zero as an eigenvalue of $\Delta_{\pi/2,0}^{\Gamma}$ is congruent to two modulo four, say
\[
\lambda_s^{\Gamma}(\pi/2,0)
<
0
=
\lambda_{s+1}^{\Gamma}(\pi/2,0)
=
\cdots
=
\lambda_{s+4t+2}^{\Gamma}(\pi/2,0)
<
\lambda_{s+4t+3}^{\Gamma}(\pi/2,0)
\]
for integers $s,t$. Since $p$ is odd, it is congruent to one of $\pm 1$ modulo four. The two cases are very similar, so let us assume $p \equiv 1\mod 4$. Then, by Lemma~\ref{lem:cornerderivatives}, $\lambda_{(p+1)/2}'(\pi/2)<0$ and  so
\begin{equation}\label{eq:nogapzero3}
\lambda_{\frac{p+1}{2}}^p\left(\frac{\pi}{2}-\e\right)
>
0
>
\lambda_{\frac{p+1}{2}}^p\left(\frac{\pi}{2}+\e\right)
\end{equation}
for $\e>0$ small. Additionally, since $p$ is odd, we notice that $p-j$ and $p-(p+1-j)$ have the same parity for all $j$. Consequently, Lemma~\ref{lem:cornerderivatives} implies that the derivatives of $\lambda_j^p$ and $\lambda_{p+1-j}^p$ have the same sign (and both derivatives are nonzero when evaluated at $\theta = \pi/2$).  Thus, for each pair $j,k$ for which $j \neq (p+1)/2$ and \eqref{eq:nogapzero1} and \eqref{eq:nogapzero2} hold, we get a quartet of eigenvalues of $\Delta_{\pi/2,0}^{\Gamma}$ that all cross zero in the same direction as $\theta$ crosses $\pi/2$. Combining this observation with \eqref{eq:nogapzero3}, we get integers $\ell_+ \neq \ell_-$ so that
\[
\lambda_{s + \ell_\pm}^{\Gamma} \left(\frac{\pi}{2}\pm\e,0\right)
<
0
<
\lambda_{s + \ell_\pm + 1}^{\Gamma} \left(\frac{\pi}{2}\pm\e,0\right)
\]
for small $\e>0$. In particular, the arguments above imply that $\ell_-$ is divisible by four and $\ell_+$ is congruent to two modulo four. Hence, with $n_\pm = s + \ell_\pm$, we may argue as before to see that $0$ is in the interior of at least one of $B_{n_+}^{\Gamma}$, $B_{n_-}^{\Gamma}$, $B_{n_+ + 1}^{\Gamma}$, or $B_{n_- + 1}^{\Gamma}$.

\end{proof}

\begin{remark}
Figure~\ref{fig:quilt_odd0} gives two specific examples of the scenario treated in Proposition~\ref{prop:nogap:zero}.
For $\p = (1,12)$, we have the $p \equiv 1\mod 4$ case detailed in the proof, with $s=5$, $t=0$, $\ell_- = 0$, and $\ell_+ = 2$.
For $\p = (11,12)$, we have $p \equiv -1 \mod 4$ and $s=65$, $t=0$, $\ell_- = 2$, and $\ell_+ = 0$.
\end{remark}

\begin{remark}
Passing from $q$ to $\mathrm{lcm}(q,4)$ for Proposition~\ref{prop:nogap:zero} is not necessary, but it is convenient, as it saves us an argument by cases. Given $\p = (p,q)$ with $p$ odd, one can run the argument above by perturbing around $(\pi/2,\pi)$ when $q \equiv 2\mod 4$ and perturbing around $(\pi/2,\pi/2)$ when $q$ is odd, which obviates the need to alter the vertical period.
\end{remark}

Taken together, these propositions prove the desired result.

\begin{proof}[Proof of Theorem~\ref{t:discreteBSconj}]
This follows immediately from Theorem~\ref{t:nogaps}, which itself follows from Propositions~\ref{prop:nogap:nonexceptionalpts}, \ref{prop:nogap:exceptionalexceptzero}, and \ref{prop:nogap:zero}.
\end{proof}

\end{document}